\documentclass[12pt]{article}

\usepackage{amsfonts,amsmath,amsthm,latexsym,amssymb,fullpage,amscd}
\usepackage[mathscr]{eucal}
\usepackage{tikz}

 at 16 true pt
 at 13  true pt
 at 12 true pt

\newcommand{\R}{\mathbb{R}}
\newtheorem{theorem}{Theorem}[section]
\newtheorem{lemma}[theorem]{Lemma}
\newtheorem{corollary}{Corollary}[theorem]
\newtheorem{definition}{Definition}[section]

\title {Fourier decay and $L^p$  Sobolev smoothing for weighted hypersurface measures in $\R^3$}

\author{Michael Greenblatt}
\date{\today}

\newcommand\blfootnote[1]{%
  \begingroup
  \renewcommand\thefootnote{}\footnote{#1}%
  \addtocounter{footnote}{-1}%
  \endgroup
}
\begin{document}
\maketitle
\begin{abstract} 

We consider local  hypersurface measures in $\R^3$ whose density is allowed to have a weight function constructed from real analytic functions in a broad sense.
 We prove $L^p$  Sobolev smoothing theorems for convolutions with such surface measures and  Fourier transform decay rate results for these measures, generalizing and
subsuming earlier results for smooth densities. Our theorems are sharp in an appropriate sense and 
 can be described in terms of relatively simple properties of the surfaces and weight functions.

\end{abstract}
\blfootnote{This work was supported by a grant from the Simons Foundation.}

\section{Background and theorem statements} 

\subsection {Overview}

Let $S$ be a hypersurface in $\R^3$ that is the graph of a real analytic function $f(x,y)$ on an open disk $D_0$ centered at the 
 origin. Rotating and translating coordinates as necessary, we assume that $f(x,y)$ is not identically zero and satisfies
\[f(0,0) = 0 {\hskip 1 in} \nabla f(0,0) = \langle 0,0 \rangle \tag{1.1}\]
We will weight the Euclidean surface measure on $S$ using a function $\phi(x,y)$ built out of real analytic functions in the following reasonably general
fashion. 

Let $q_1(x,y),...,q_N(x,y)$ be real analytic functions on $D_0$. Define $E = \cap_{j =1}^N\{(x,y) \in D_0: q_j(x,y) > 0\}$. We assume $(0,0)$ is in the closure of $ E$ to avoid
 trivialities. Note that by taking $N = 1$ and $q_1(x,y) = 1$, we are allowing the case where $E = D_0$.   For real analytic functions $\{h_j(x,y)\}_{j=1}^M$ 
on $D_0$ with $h_j(0,0) = 0$ but  none identically zero, and a $C^1$ function $\alpha(x,y)$ on $D_0$ supported on a disk $D \subsetneq D_0$ centered at the origin we define
\[ \phi(x,y) = \chi_E(x,y) \,\alpha(x,y) \prod_{j=1}^M|h_j(x,y)|^{\gamma_j} \tag{1.2}\]
For now the only assumption we make on the exponents $\gamma_j$ is that they are real and nonzero (so either positive or negative)
such that  $\int_{D \cap E} \prod_{j=1}^M|h_j(x,y)|^{\gamma_j}$ is finite. The latter condition ensures that $\phi(x,y)$ is always 
integrable over $D$, which will be needed to define our operators.

\noindent Let $\mu$ be the surface measure on $S$ described by integration against functions $g$ by
\[ \int_{\R^3} g \,d\mu  = \int_D g(x,y, f(x,y)) \,\phi(x,y)\,dx\,dy \tag{1.3}\] 
Sometimes it is more convenient to write this as 
\[ \int_{D \cap E} g(x,y, f(x,y)) \,\phi(x,y)\,dx\,dy \tag{1.4} \] 
The focus of this paper will be to find  decay estimates for the Fourier transforms of such measures as well as $L^p$ Sobolev smoothing results for convolutions with such measures.

We will always assume that $D$ is sufficiently small that 
one can use Theorem 3.1 of this paper to simultaneously resolve the singularities of $f$ and the
 various functions $h_j$, $q_j$ appearing here on $D$. In this way $D$ will be a finite union of wedges $D_i$ such that to each wedge $D_i$ there is a coordinate change after which
$D_i$ becomes a new wedge $D_i'$ on which each of these resolved functions is 
approximately a monomial. We will then decompose a given $D_i'$  into dyadic rectangles $R_{ik}$ for our analysis.

Key to proving our results will be estimates on the Fourier transform $\hat{\mu}(\lambda)$, given by
\[ \hat{\mu}(\lambda_1,\lambda_2,\lambda_3) =  \int_{D \cap E} e^{-i\lambda_1 x - i\lambda_2 y - i \lambda_3 f(x,y)} \phi(x,y)\,dx\,dy \tag{1.5}\]
The basic idea is that on a given rectangle $R_{ik}$, since the functions $f$, $h_j$, and $q_j$ are approximately monomials, one can use Van der Corput lemmas and related facts to prove
desirable estimates on the contribution to $\hat{\mu}(\lambda)$ coming from $R_{ik}$. Adding these estimates over all $k$ and $i$ will give estimates of the form $|\hat{\mu}(\lambda)| \leq C 
(1 + |\lambda|)^{-\eta} (\ln (2 + |\lambda|))^l$, where $l = 0$ or $1$ and $\eta \leq \frac{1}{3}$ that are sharp when $\eta < \frac{1}{3}$. These will be given in  Theorem 1.1.

Let $\mu_{ik}$ be the measure where  in $(1.4)$ one replaces $D \cap E$ by $R_{ik}$ in the original coordinates. 
Then the contribution to $\hat{\mu}(\lambda)$ coming from $R_{ik}$ will be equivalent to a certain $L^2$ to
 $L^2_{\frac{1}{3}}$ Sobolev estimate for $T_{ik} f = f \ast \mu_{ik}$. Interpolating these estimates with straightforward $L^p$ to $L^p$ estimates on $T_{ik}$ 
deriving from Young's inequality, $1 < p < 
\infty$, will then give $L^p$ Sobolev space smoothing results for $T_{ik}$. Adding these over all $k$ and $i$ will result in corresponding  Sobolev space smoothing results for $T f = f \ast \mu$. These will be given in Theorem 1.2, with an analogous sharpness statement.

Integrating over sets $D \cap E$ in $(1.4)$-$(1.5)$ allows us to consider the Fourier transforms of real  analytic "pieces" of surface measures. For example if one just wants to estimate the Fourier transform of the measure corresponding to the  portion of the surface 
 inside some real analytic three-dimensional region, one can use a partition of unity to reduce the integral in question into finitely many terms of the form $(1.5)$. 
Another example would be if one wants to estimate the Fourier transform of a surface measure where one wants several different 
weights on several different parts of the surface. If the different parts can be defined via real analytic functions, then one can
 write the integral in question as the sum of finitely many terms of the form $(1.5)$.

There has been quite a bit of work done previously estimating the decay rate of Fourier transforms of hypersurface measures in $\R^3$, often in the situation where $\phi(x,y)$
 is a smooth compactly supported function. For example, as we will describe below
the general stability results of Karpushkin [K1][K2] imply part 1 of Theorem 1.1 (and more) for this class of $\phi(x,y)$.
Other estimates and generalizations to smooth $f(x,y)$ are shown in [D] [IKeM] [IM]. The latter two papers use the early work [V], where nice 
geometric connections to the Newton polygon of $f(x,y)$ in certain "adapted" coordinate systems are proven.
Further cases of part 1 of Theorem 1.1  follow from [PrY] and [G1], and
other results on the decay of the Fourier transforms of two-dimensional hypersurface measures are proven in [ESa]. If one adds an appropriate nondegeneracy
condition on the phase, many of the two-dimensional results extend to higher dimensions; we refer to [CKaNo] [DeNiSa] [G5]  [V] for examples.

As for $L^p$ Sobolev improvement theorems for convolutions with two dimensional hypersurface measures, for the case of smooth $\phi(x,y)$ the paper [G7]
shows that a result stronger than Theorem 1.2 holds. Namely, when $\phi$ is smooth  one can replace the vertex $(\frac{1}{2}, \frac{1}{3})$ by  $(\frac{1}{2}, \frac{1}{2})$ and then
$L^p(\R^3)$ to $L^p_s(\R^3)$ boundedness holds in the interior of the resulting trapezoid.

It should be mentioned that this paper builds on the author's unpublished preprint [G6] and some parts of this paper are taken from this unpublished work.

\subsection{Motivation for the optimal decay rate for $|\hat{\mu}(\lambda)|$}

Suppose that $\alpha(x,y)$ is nonnegative with $\alpha(0,0) > 0$.
Let $v$ be any unit vector in $\R^3$ and examine $\hat{\mu}(tv)$ as a function of the real variable $t$. There is a general heuristic that the supremal $\epsilon$ for which one has 
an estimate of the form $|\hat{\mu}(tv)| \leq C_v |t|^{-\epsilon}$ for $|t| > 1$ should be the same as the supremal $\epsilon$ for which one has an estimate of the form
$\sup_a \mu(\{x:  x \cdot v \in [a - u, a + u]\}) \leq D_v |u|^{\epsilon}$ for $|u| < 1$ (if this supremum is less than 1), and if all goes well one can even remove the dependence on the direction $v$ and say that the  supremal
 $\epsilon$ for which one has  an estimate of the form $|\hat{\mu}(\lambda)| \leq C |\lambda|^{-\epsilon}$ for $|\lambda| > 1$ should be the same as the supremal $\epsilon$ for which one has an estimate of the form
$\sup_{a,v} \mu(\{x:  x \cdot v \in [a - u, a + u]\}) \leq D|u|^{\epsilon}$ for $|u| < 1$. Informally, the magnitude of the oscillatory integral should be comparable to  the maximal "mass" of one wavelength of the measure. We refer to the papers [BaGuZhZo] [BNW] for examples of this philosophy.

If we add the assumption that  $\phi(x,y)$ is smooth (i.e. remove $E$ and the $h_j(x,y)$),  it is a remarkable fact that the supremal $\epsilon$ for which one has an estimate
 $|\hat{\mu}(\lambda)| \leq C |\lambda|^{-\epsilon}$ is not only given by the above 
supremum of the measures of the above slabs, but also this supremum is achieved for the specific slab $\{x:  x_3  \in [-u, u]\}$ in our coordinates where $(1.1)$ holds. This is true for the
following reason. Karpushkin's stability results [K1] and [K2] can be readily used to show that the supremal $\epsilon$ for which $|\hat{\mu}(\lambda)| \leq C |\lambda|^{-\epsilon}$ 
holds is the same as the supremal $\epsilon$  where such an estimate holds in the vertical direction. Similarly, his stability results show that the supremal $\epsilon$ for which one has an estimate of the form
$\sup_{a,v} \mu(\{x:  x \cdot v \in [a-u , a + u]\}) \leq D|u|^{\epsilon}$ for $|u| < 1$ is the same as the supremal $\epsilon$ corresponding to the slabs $\{x:  x_3  \in [-u, u]\}$. Standard
resolution of singularities techniques (see [AGuV]) then show that these latter values of $\epsilon$ are the same. 

Thus when $\phi(x,y)$ is smooth and nonnegative with $\phi(0,0) > 0$, the  supremal $\epsilon$ for which one has an estimate $|\hat{\mu}(\lambda)| \leq C |\lambda|^{-\epsilon}$ for $|\lambda| > 1$ is exactly the supremal $\epsilon$ for which one has an estimate  $\mu(\{x:  x_3  \in [-u,u]\}) \leq D|u|^{\epsilon}$ for $|u| < 1$. This can be refined to even include optimal powers of $\ln |\lambda|$ and $\ln u$ respectively. 

Suppose now that $\phi(x,y)$ is of the more general form $(1.2)$, and analogous to the above assume $\alpha(x,y)$ is nonnegative with $\alpha(0,0) > 0$. 
We will see in Theorem 1.1 that if the supremal $\epsilon$ for which $\mu(\{x:  x_3  \in [-u,u]\}) \leq D|u|^{\epsilon}$
 for $|u| < 1$ is less than  $\frac{1}{3}$, then  the above characterization once again holds and this result is sharp. For general $\alpha(x,y)$ one gets this rate of decay for
$|\hat{\mu}(\lambda)|$ or better.
We will assume  a relatively easy to describe but not too restrictive compatibility condition on $E$ and the function $\prod_{j=1}^M|h_j(x,y)|^{\gamma_j}$ ; such an assumption is needed because as we will see in the second example of section 1.4, the characterization does
 not hold in full generality.  Using various interpolations on the estimates obtained in proving Theorem 1.1, in Theorem 1.2 we will prove an $L^p$ Sobolev smoothing theorem for 
convolutions with $\mu$. This result will also be sharp in an appropriate sense if the above $\epsilon$ is less than $\frac{1}{3}$ and $\alpha(x,y)$ is nonnegative with $\alpha(0,0) > 0$. 

\noindent Specifically, for a given $E$ and $f(x,y)$ we define $m_{D}(t)$ by

\[m_{D}(t) = \int_{\{(x,y) \in D \cap E: |f(x,y)| < t\}}\prod_{j=1}^M|h_j(x,y)|^{\gamma_j} \tag{1.6}\]
In other words, $m_{D}(t)$  is the $\mu$ measure of the slab $|z| < t$ when  $\alpha(x,y) = 1$. Using resolution of singularities (see [G4] for closely 
related statements), it can be shown that there are $(\eta,l)$ such that if $D$ is small enough, then as  $t \rightarrow 0$, $m_{D}(t)$ has asymptotics of the form
\[m_{D}(t) = A_D t^{\eta}|\ln t|^l + o(t^{\eta}|\ln t|^l) \tag{1.7}\]
Here $A_D > 0$, $\eta > 0$, and $l = 0$ or $1$. Theorem 1.1 will say that when $\eta < {1 \over 3}$, if $\alpha(x,y)$ is nonnegative
with $\alpha(0,0) > 0$ then the optimal decay index for $|\hat{\mu}(\lambda)|$ will be given by 
$\eta$ (and there is even a corresponding logarithmic factor). For general $\alpha(x,y)$, the decay index is $\eta$ or better. 
In Theorem 1.2 we will see that if $\eta < \frac{1}{3}$ then $\eta$ will also give the maximum $L^p$ Sobolev derivative
 improvement for $Tf = f \ast \mu$ for $p$ in an interval containing $2$ when $\alpha(x,y)$ is nonnegative with $\alpha(0,0) > 0$. Similar to above, when $\eta < \frac{1}{3}$, 
 for general $\alpha(x,y)$ one gets at least this amount of Sobolev smoothing. 

Given that we are in coordinates such that $\nabla f(0,0) = \langle 0,0 \rangle$, the function  $|\hat{\mu}(\lambda)|$ will often have slowest decay in the $(0,0,1)$ direction. 
This holds in the case of smooth nonnegative $\phi(x,y)$ with $\phi(0,0) > 0$  described above for example.  But this does not hold for all $\phi(x,y)$, even when $\alpha(x,y)$ is 
nonnegative with $\alpha(0,0) > 0$.
For example, suppose $f(x,y) = x^k + y^k$, $\alpha (x,y) = 1$, $D \cap E = D$,  and there is one $h_j(x,y)$, given by
 $x^{-1 + \epsilon}$ for small $\epsilon$. Then one only gets $C|\lambda|^{-\epsilon}$ decay in the $x$ direction, while in the vertical direction one has a decay rate of 
$C|\lambda|^{-1/k - \epsilon/k}$, which is faster for small enough $\epsilon$.

 What is going on here is that in the "usual" situation, such as the case of  smooth $\phi(x,y)$, away from the vertical direction one readily gets very fast decrease
 in $|\hat{\mu}(\lambda)|$ simply by doing repeated integrations by parts
in the definition of the integral defining $\hat{\mu}(\lambda)$ since its phase has nonvanishing gradient. However, a sharp factor like $x^{-1 + \epsilon}$ can ruin this. So in order to 
get the desired $\eta$ as our exponent we must add a condition to ensure this sort of issue does not arise. It turns out there is a simple condition that does this for us, given by the following.

\begin{definition} We say that $E$  and the function  $\prod_{j=1}^M|h_j(x,y)|^{\gamma_j}$ are compatible with $f(x,y)$ if there is a $p > \frac{1}{1 - \min(\eta, 1/3)}$ such that 
there is a disk $D$ centered at the origin such that we have
\[\int_{D \cap E}  (\prod_{j=1}^M|h_j(x,y)|^{\gamma_j})^p  < \infty \tag{1.8}\]
\end{definition}

Since $0 < \min(\eta, \frac{1}{3}) \leq \frac{1}{3}$, the quantity $ \frac{1}{1 - \min(\eta, 1/3)}$ is in $(1, \frac{3}{2}]$. So the compatibility condition is
saying that beyond $\prod_{j=1}^M|h_j(x,y)|^{\gamma_j}$ being integrable over $D \cap E$ (which we need for our operators to even be well-defined), there is a 
$p_0(\eta) \leq \frac{3}{2}$ such that this function must be in $L^p(D \cap E)$ for some $p > p_0(\eta)$.

\subsection{Theorem statements}

We first give the theorem providing bounds for $|\hat{\mu}(\lambda)|$. 

\begin{theorem}

 Suppose $E$ and $\prod_{j=1}^M|h_j(x,y)|^{\gamma_j}$ are compatible with $f(x,y)$. Then if  $D$ is a sufficiently small disk centered at the origin the following hold for some constant
$C$ independent of $\lambda$.

\begin{enumerate}

\item

\begin{enumerate} 

\item  If $\eta < {1 \over 3}$ we have an estimate
\[|\hat{\mu}(\lambda)| \leq C(1 + |\lambda|)^{-\eta} (\ln(2 + |\lambda|))^l \tag{1.9a}\]

\item If $\eta = {1 \over 3}$ then one has an additional logarithmic factor:

\[|\hat{\mu}(\lambda)| \leq C(1 + |\lambda|)^{-{\frac{1}{3}}} (\ln(2 + |\lambda|))^{l+1}\tag{1.9b}\]

\item If $\eta  > {1 \over 3}$ then one at least has 
\[|\hat{\mu}(\lambda)| \leq C(1 + |\lambda|)^{-{1 \over 3}}\tag{1.9c}\]

\end{enumerate}

\item

\noindent If $\alpha(x,y)$ is nonnegative with $\alpha(0,0) > 0$, then if $\eta < {1 \over 3}$ the estimate of $(1.9a)$ is sharp
in the following sense. If $l = 0$ then $(1.9a)$ does not hold with $\eta$ replaced by any $\eta ' > \eta$, while if $l = 1$ then $(1.9a)$ does not hold with $l = 0$.

\end{enumerate}

\end{theorem}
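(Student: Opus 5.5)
The argument follows the outline in the overview: resolve singularities, decompose into dyadic rectangles, estimate each piece by van der Corput and trivial bounds, and sum.

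\textbf{Decomposition.} Take $D$ small enough that Theorem 3.1 simultaneously resolves $f$, the $h_j$, and the $q_j$. Then $D\cap E$ is, up to measure zero, a finite union of wedges $D_i$. On each $D_i$ there is a coordinate change after which $f$, each $h_j$, each $q_j$, and the Jacobian are comparable to monomials $x^a y^b$ times units, with the corresponding derivative bounds. On each transformed wedge, $\chi_E$ is either identically one or identically zero, since each $q_j$ has constant sign there. Decompose each wedge into dyadic rectangles $R_{ik}$, $k=(k_1,k_2)$, with $x\sim 2^{-k_1}$ and $y\sim 2^{-k_2}$ (in the curved wedge case the second variable is measured in the shifted coordinate). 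On $R_{ik}$ the weight is $\sim 2^{-\beta\cdot k}$, the phase $\lambda_3 f$ is $\sim |\lambda_3|2^{-a\cdot k}$, and the area is $2^{-k_1-k_2}$. The asymptotics (1.7) identify $\eta$ and $l$ in terms of these exponents: $\eta$ is the smallest value of $t$ for which the sum over $k$ of the weighted areas of the rectangles with $2^{-a\cdot k}\le t$ behaves like $t^{\eta}$. Equivalently, it is the Newton-polygon-type condition $\sum_k 2^{-(1,1)\cdot k-\beta\cdot k}\min(1,(t2^{a\cdot k})^{-\eta'})<\infty$ exactly when $\eta'>\eta$. The exponent $l=1$ arises when two exponent directions tie.

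\textbf{Estimates on one rectangle.} Let $\hat\mu_{ik}$ be the contribution of $R_{ik}$. There are three bounds.

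First, the trivial bound $|\hat\mu_{ik}|\le C\,2^{-(1,1)\cdot k-\beta\cdot k}$.

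Second, a van der Corput bound in the vertical direction. Some second or higher derivative of $\lambda_3 f$ in the monomial coordinates has size $|\lambda_3|2^{-a\cdot k}$ times the appropriate powers of the side lengths. This gives a gain of $(|\lambda_3|2^{-a\cdot k})^{-1/2}$ per variable when the Hessian is nondegenerate, and at worst one-variable gains of order $1/2$ or $1/3$, using the $C^1$ bounds on $\alpha$ and the monomial derivative bounds on the weight to handle the amplitude. The worst case over rectangles is a gain of $(|\lambda|2^{-a\cdot k})^{-1/3}$, and this is the source of the cap at $1/3$.

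Third, an integration by parts in $x$ or $y$ for the non-vertical directions, when $|(\lambda_1,\lambda_2)|\gg|\lambda_3|\,|\nabla f|$ on $R_{ik}$. This gains $(|\lambda|2^{-k_j})^{-1}$ and costs a derivative of the weight, that is, a factor $2^{k_j}$. For rectangles that are very thin relative to $|\lambda|^{-1}$ this is useless, and there we use the trivial bound.

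For the thin rectangles we need compatibility. By H\"older with the exponent $p$ of (1.8), the total mass of the region where $\min(|x|,|y|)\lesssim|\lambda|^{-1}$ is at most $C|\lambda|^{-(1-1/p)}$. Since $p>\frac{1}{1-\min(\eta,1/3)}$, we have $1-1/p>\min(\eta,1/3)$, so these regions are negligible.

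\textbf{Summation.} Combine the bounds as $|\hat\mu_{ik}|\le C2^{-(1,1)\cdot k-\beta\cdot k}\min\bigl(1,(|\lambda|2^{-a\cdot k})^{-1/3}\bigr)$, plus the negligible part. Sum this geometric-type series over $k$ by comparing with the sums defining $m_D(t)$ at $t=|\lambda|^{-1}$.
\begin{itemize}
\item When $\eta<1/3$, the sum is dominated by the rectangles with $2^{-a\cdot k}\approx|\lambda|^{-1}$. It gives $|\lambda|^{-\eta}(\ln|\lambda|)^l$, where $l$ counts a one-parameter family of tied rectangles.
\item When $\eta=1/3$, the exponent $1/3$ coincides with the extremal direction, and one extra logarithm appears.
\item When $\eta>1/3$, the sum converges to $|\lambda|^{-1/3}$.
\end{itemize}

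\textbf{Main obstacle.} The hardest step is getting the per-rectangle van der Corput estimate with the right exponent uniformly across all rectangles and all directions of $\lambda$. This is particularly delicate on curved wedges near the zero set of $f$, where the phase is only approximately monomial after the coordinate change. I would handle this by using one-dimensional van der Corput in whichever variable has a nonvanishing derivative of order at most $3$, integrating the other variable trivially, and using the $C^1$ amplitude bounds.

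\textbf{Sharpness (part 2).} Suppose $\alpha\ge0$ and $\alpha(0,0)>0$, and suppose (1.9a) held with a better exponent or without the logarithm. Test against $\lambda=(0,0,\lambda_3)$ and use the standard Fourier-side argument. Integrate $\hat\mu(0,0,\lambda_3)$ against a Schwartz function $\hat\rho(\lambda_3/\,t^{-1})$ with $\rho\ge0$ and $\rho\ge1$ on $[-1,1]$. This bounds $\int\rho(f/t)\,\phi$ from above by $C t^{\eta'}$ or $C t^{\eta}$, respectively. But $\int\rho(f/t)\,\phi\ge c\,m_D(t)$, because $\alpha\ge c>0$ near the origin and the asymptotics (1.7) are local. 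This contradicts $m_D(t)\sim A_Dt^\eta|\ln t|^l$ as $t\to0$.
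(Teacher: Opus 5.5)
Your overall framework is the same as the paper's: resolution into wedges, dyadic rectangles, a per-rectangle bound, H\"older via (1.8), the layer-cake formula with (1.7), and a sublevel-set argument for sharpness. Your Schwartz-function version of the sharpness step is a valid self-contained substitute for Theorem 5.1, since $\eta<1/3<1$.

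The gap is in the per-rectangle estimate, which is the real content of the proof (Theorem 4.1). You apply van der Corput to derivatives of $\lambda_3 f$ alone. But in the wedge coordinates $\zeta_i(x,y)=(x,\pm y+k_i(x))$ the linear part of the phase is no longer linear. After absorbing $p_ix$ into $\lambda_1$, the term $\lambda_2 y$ becomes $\lambda_2 y+\lambda_2K_i(x)$ with $K_i(x)=l_ix^{s_i}+\cdots$, so $\partial_x^2P=\lambda_3\partial_x^2f_i+\lambda_2K_i''(x)$. When $\beta_i=0$ and $s_i<\alpha_i$, these two terms cancel at points where $|\lambda_2|\approx c\,|\lambda_3|x^{\alpha_i-s_i}\ll|\lambda_3|$. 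A suitable $\lambda_1$ then makes $\partial_xP$ vanish there too, so the $x$-phase has a degenerate critical point. At such points the bound you derive from derivatives of $\lambda_3 f$ fails. Your third estimate gains only $(|(\lambda_1,\lambda_2)|\,2^{-k_j})^{-1}$, not $(|\lambda|2^{-k_j})^{-1}$.

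This interaction is the actual source of the exponent $1/3$. It does not come from $\lambda_3 f$ itself: for $\beta_i=0$ one has $\alpha_i\ge2$, and $\partial_x^2f_i\approx d_i\alpha_i(\alpha_i-1)x^{\alpha_i-2}$ never degenerates. Nor is it tied to the zero set of $f$; in the example of Section 1.4 the curved wedge comes from the $q_j$ and $h_j$. The paper handles this with the $2\times2$ matrix of determinant $\alpha_i(\alpha_i-1)s_i(s_i-1)(s_i-\alpha_i)\neq0$. This shows that on each interval $[(1-c_1)x',(1+c_1)x']$ one of $|\partial_x^2P|$, $|\partial_x^3P|$ is $\gtrsim|\lambda_3|x^{\alpha_i-p}$, with extra care when $\alpha_i=2$ or $s_i=2$. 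When $s_i=\alpha_i$, cancellation of the leading coefficients forces $|\lambda_2|\gtrsim|\lambda_3|$, so the first-derivative argument applies. Your phrase ``whichever variable has a nonvanishing derivative of order at most 3'' assumes exactly what has to be proved.

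A second hole is the case $\alpha_i,\beta_i>0$. There, one-dimensional van der Corput in one variable with trivial integration in the other does not suffice. Take a nondegenerate saddle $f=x^2-y^2$; the wedge along $y=x$ gives $f_i=-2xy-y^2$. Then $\partial_x^2P=\partial_x^3P=0$, and the first derivatives vanish along lines that cross the rectangle. On $[X,2X]\times[Y,2Y]$ the bound from $\partial_y^2P=-2\lambda_3$ is $X|\lambda_3|^{-1/2}$. This is worse than the required $XY(|\lambda_3|XY)^{-1/3}$ whenever $(XY)^{-1}<|\lambda_3|<X^2Y^{-4}$. The paper instead uses the mixed-derivative Lemma 2.3 with $|\partial_{xy}P|\gtrsim|\lambda_3|x^{\alpha_i-1}y^{\beta_i-1}$; neither $\lambda_2K_i(x)$ nor the linear terms affect this mixed derivative.

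Finally, a smaller bookkeeping slip. Your combined bound ``$\min(1,(|\lambda|2^{-a\cdot k})^{-1/3})$ plus the thin strips'' is not valid as stated. When $\beta_i=0$ and $y\ll x^{\alpha_i}$, which happens on wedges $y<x^{M_i}$ with $M_i>\alpha_i$, the integration-by-parts bound $(|\lambda|y)^{-1}$ is not dominated by $(|\lambda|x^{\alpha_i})^{-1/3}$ even when $|\lambda|y\ge1$. You must keep the terms $\min(1,|\lambda x|^{-1})$ and $\min(1,|\lambda y|^{-1})$ over the whole wedge and control them by H\"older, as in (5.5)--(5.6). Your H\"older estimate extends to this by a dyadic sum over $y\sim2^{-j}$.
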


\vskip 0.1 in

\noindent The constant $C$ in Theorem 1.1 will depend on $f(x,y)$, $\alpha(x,y)$, the $|h_j(x,y)|^{\gamma_j}$, and the $q_j(x,y)$ defining $E$. 

\noindent The following is our theorem giving Sobolev space smoothing for $Tf = f \ast \mu$. 

\vskip 0.2 in

\begin{theorem}

Suppose $E$ and $\prod_{j=1}^M|h_j(x,y)|^{\gamma_j}$ are compatible with $f(x,y)$. If $D$ is sufficiently small, then the following hold.

\begin{enumerate}

\item  Let $U$ be the open triangle with vertices $({1 \over 2}, {1 \over 3})$, $(0,0)$, and $(1,0)$, and  let $V = \{(x,y) \in U: 
 y < \eta\}$. Then $Tf = f \ast \mu$ is bounded from $L^p(\R^3)$ to $L^p_{s}(\R^{3})$ if $({1 \over p}, s) \in V$. 
 
\item  Suppose $\alpha(x,y)$ is  nonnegative with $\alpha (0,0) > 0$. Then if $1 < p < \infty$ and 
 $T$ is bounded from $L^p(\R^3)$ to $L^p_{s}(\R^3)$ we must have $s \leq \eta$.

\end{enumerate}

\end{theorem}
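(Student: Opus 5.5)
For part 1, I will use the decomposition described in the overview: resolve singularities via Theorem 3.1, so $D$ is a finite union of wedges $D_i$. After the coordinate changes, each wedge is split into dyadic rectangles $R_{ik}$, giving $\mu = \sum_{i,k}\mu_{ik}$ and $T = \sum_{i,k} T_{ik}$ with $T_{ik}f = f \ast \mu_{ik}$. The plan is to prove, for each piece, an $L^2 \to L^2_{s}$ bound and an $L^p \to L^p$ bound. These are interpolated by complex interpolation (Stein's analytic family, applied to $(1-\Delta)^{z/2}T_{ik}$), and the interpolated bounds are summed over $k$ and $i$.

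\textbf{Step 1: the $L^2$ estimate.} For the $L^2$ estimate I use Plancherel: $\|T_{ik}\|_{L^2 \to L^2_{s}} \approx \sup_\lambda (1+|\lambda|)^{s}|\hat\mu_{ik}(\lambda)|$. The proof of Theorem 1.1 gives, on each $R_{ik}$, a van der Corput bound of the shape $|\hat\mu_{ik}(\lambda)| \leq C\min\big(\|\mu_{ik}\|,\, \|\mu_{ik}\|\,(2^{-a_{ik}}|\lambda|)^{-1/3}\big)$. Here $2^{-a_{ik}}$ is the size of the relevant derivative of the phase on $R_{ik}$. Taking $s=\frac13$ this yields
\[\|T_{ik}\|_{L^2\to L^2_{1/3}} \leq C\,2^{a_{ik}/3}\,\|\mu_{ik}\|.\]
I will extract this uniform bound per rectangle directly from the proof of Theorem 1.1, rather than from the summed statement.

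\textbf{Step 2: the $L^p$ estimate and interpolation.} On the other side, Young's inequality gives $\|T_{ik}\|_{L^p\to L^p} \leq \|\mu_{ik}\| = \int_{R_{ik}}|\phi|$ for $1 \le p \le \infty$. The compatibility condition (1.8), via Hölder, controls this by $|R_{ik}|^{1-1/p_1}\|\prod|h_j|^{\gamma_j}\|_{L^{p_1}(R_{ik})}$ for some $p_1 > \frac{1}{1-\min(\eta,1/3)}$. Interpolating between $(\frac12,\frac13)$ and $(\frac1p,0)$ with $p$ near $1$ or $\infty$ gives $L^p\to L^p_s$ bounds throughout the triangle $U$. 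Each bound is $\|\mu_{ik}\|^{1-\theta}\big(2^{a_{ik}/3}\|\mu_{ik}\|\big)^{\theta}$, where $\theta = 3s$.

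\textbf{Step 3: summing over $k$ (the main obstacle).} The hard step is summing these over $k$ and showing the sum converges exactly when $s<\eta$. On $R_{ik}$ the functions are approximately monomials, so $\|\mu_{ik}\| \approx 2^{-b_{ik}}$ and $2^{-a_{ik}} \approx \sup_{R_{ik}}|f|$. The sum becomes $\sum_k 2^{-b_{ik}+ s a_{ik}}$. This is essentially the sum whose convergence for $s<\eta$ defines the exponent in the asymptotics (1.7), since $m_D(t) \approx \sum_{k:\,2^{-a_{ik}}<t} 2^{-b_{ik}}$. I expect it to be a geometric series in the lattice of dyadic indices, convergent precisely when $s<\eta$.

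The compatibility condition is needed so that the $L^p$ endpoint can be taken with some slack. It ensures that on rectangles where $\prod|h_j|^{\gamma_j}$ is singular (e.g.\ $x^{-1+\epsilon}$), the Young bound does not overwhelm the Sobolev gain. The restriction $s<\frac13$ in $U$ takes care of the case $\eta\ge \frac13$. Care is needed along edges of wedges where $a_{ik}$ and $b_{ik}$ are linearly related. There I will check that the exponent condition is strict, so that no logarithm occurs for $s<\eta$.

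\textbf{Part 2: necessity of $s \le \eta$.} Suppose $T$ is bounded $L^p\to L^p_s$ with $1<p<\infty$. I test on $f = \chi_{B_t}$, a slab-adapted bump: take $f$ to be the indicator of the set $\{|z|<t,\ |(x,y)|<c\}$, or equivalently a smooth version of it. Since $\alpha \geq 0$ with $\alpha(0,0)>0$, on the corresponding slab $T f \gtrsim m_D(t) \approx t^\eta|\ln t|^l$ over a set of measure comparable to $t$. Then $\|T f\|_{L^p_s}$ is at least $t^{-s}$ times this: the factor $t^{-s}$ comes from a Littlewood–Paley piece at frequency $t^{-1}$ in the $z$-direction, using that $Tf$ varies on scale $t$ vertically.

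Comparing with $\|f\|_p \approx t^{1/p}$ gives $t^{\eta - s}|\ln t|^l \lesssim C$ as $t\to 0$, hence $s\le\eta$. The delicate point is making the lower bound on the $L^p_s$ norm rigorous. I would instead use duality: pair $Tf$ with a test function $g$ that oscillates in $z$ at frequency $t^{-1}$ and is supported near the slab. Alternatively, I would apply a smooth cutoff multiplier and compare with the $L^p$ bound for $(1-\Delta)^{s/2}$ composed with a Fourier projection to $|\xi_3|\sim t^{-1}$. This is the standard scaling argument, and it uses (1.7) with $A_D>0$.
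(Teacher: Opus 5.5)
Your architecture for part 1 matches the paper's: a per-piece $L^2\to L^2_{1/3}$ bound from Fourier decay, the Young bound, interpolation with $\theta=3s$, then summation. But the per-rectangle decay estimate in Step 1 is false as you specify it, and the error sits exactly where the compatibility condition is needed.

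With $2^{-a_{ik}}\approx\sup_{R_{ik}}|f|$, the bound $|\hat\mu_{ik}(\lambda)|\le C\|\mu_{ik}\|\min(1,(2^{-a_{ik}}|\lambda|)^{-1/3})$ already fails for smooth $\phi$. Take $f=x^2+y^2$, a dyadic rectangle $x\sim 2^{-l}$, $y\sim 2^{-m}$ along the $x$-axis with $m\gg 2l$, and $\lambda=(0,2^m,0)$. The phase $2^m y$ varies by only $1/2$ across the rectangle, so $|\hat\mu_{ik}(\lambda)|\gtrsim\|\mu_{ik}\|$. Your bound instead gives $\|\mu_{ik}\|(2^{m-2l})^{-1/3}$, which is tiny.

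Away from the vertical direction the phase has nonvanishing gradient, and one only gets first-derivative decay at the scale of the rectangle's side lengths. The correct per-rectangle bound is Theorem 4.1, with $\max(|\lambda x|^{-1},|\lambda y|^{-1},|\lambda x^{\alpha_i}y^{\beta_i}|^{-1/3})$ inside the minimum. After interpolating, the sum over rectangles is controlled by three terms. The first is $\int_{D\cap E}|f|^{-s}\prod_j|h_j|^{\gamma_j}$, which is finite for $s<\eta$ by (1.7), as you anticipate; the paper decomposes dyadically in $|f|$. The other two are $\int_{D_i'}|x|^{-s}\prod_j|h_{ij}|^{\gamma_j}$ and $\int_{D_i'}|y|^{-s}\prod_j|h_{ij}|^{\gamma_j}$, and (1.7) says nothing about these.

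These last two terms are the only place compatibility is used. One applies H\"older over all of $D_i'$, pairing $|x|^{-s}$ or $|y|^{-s}$ in $L^{t'}$ with $\prod_j|h_{ij}|^{\gamma_j}\in L^{t}$. Here $t>1/(1-\min(\eta,1/3))$ is what (1.8) provides, so $st'<1$ whenever $s\le\min(\eta,1/3)$.

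Your H\"older step in Step 2, applied rectangle by rectangle to the Young bound, does nothing. The weight is comparable to a constant on each dyadic rectangle, and $\sum\|\mu_{ik}\|=\|\mu\|<\infty$ anyway. Your Step 3 claim therefore cannot be right as stated, since it would prove part 1 without compatibility. For $f=x^k+y^k$ ($k\ge 4$) with weight $|x|^{-1+\epsilon}$, one has $\eta=(1+\epsilon)/k$. Yet $\hat\mu$ decays only like $|\lambda|^{-\epsilon}$ in the $x$-direction, so $L^2\to L^2_s$ fails for small $\epsilon$ and any $\epsilon<s<\eta$.

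For part 2, your slab test leaves the key lower bound on the $L^p_s$ norm unproved. The paper avoids this by using duality and interpolation to pass to $p=2$, where the Sobolev bound is equivalent to $|\hat\mu(\lambda)|\le C(1+|\lambda|)^{-s}$. It then applies the Carbery--Christ--Wright sublevel estimate (Theorem 5.1) in the vertical direction to get $\nu(\{|f|<t\})\le Ct^{s}$ for $d\nu=\phi\,dx\,dy$, which contradicts (1.7) if $s>\eta$.
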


\begin{center}
\begin{tikzpicture}
\draw  (0,0) -- (10,0) -- (7.5,5/3) -- (2.5,5/3) -- (0,0);
\node at (10,-0.3) {(1,0)};
\node at (0,-0.3) {(0,0)};
\draw [dashed] (0,5/3) -- (2.5,5/3); 
\draw [dashed] (7.5,5/3) -- (11,5/3); 
\node at (12.5,5/3) {$ s =\eta$};
\draw [thick, ->] (0,0) -- (0,10/3); 
\node at (0,11/3) {$s$};
\draw [ ->] (0,0) -- (12,0) node [right] {${1 \over p}$};
\draw [dashed] (2.5,5/3) -- (5,10/3);
\draw [dashed] (7.5,5/3) -- (5,10/3);
\node at (5.8, 3.5) {$({1 \over 2} , {1 \over 3})$};
\node at (5,5/6) {$V$};
\end{tikzpicture}
\end{center}
\begin{center} {Sample boundedness region when $\eta < {1 \over 3}$.}\end{center}

Note that when $\eta \geq {1 \over 3}$, $V$ is just the triangle $U$, while if $\eta < {1 \over 3}$, $V$ is a trapezoid with vertices $(0,0), ({3 \over 2}\eta, \eta), 
(1 -  {3 \over 2}\eta, \eta)$, and $(1,0)$. Thus when $\eta < {1 \over 3}$, part 2 of Theorem 1.2 shows that part 1 gives the sharp amount of $L^p(\R^3)$ to $L^p_s(\R^3)$ 
improvement up to endpoints when $\frac{1}{p} \in ({3 \over 2}\eta, 1 -  {3 \over 2}\eta)$, while if $\eta = {1 \over 3}$ the same is true for $p = 2$.

Part 2 of Theorem 1.2  is true for the following reason.  If $\alpha(x,y)$ is nonnegative with $\alpha(0,0) > 0$, then if one had
$L^p(\R^3)$ to $L^p_s(\R^3)$ Sobolev improvement for some $s > \eta$, then  by duality and interpolation one would obtain $L^2(\R^3)$ to $L^2_s(\R^3)$ Sobolev improvement and
this would contradict the last part of Theorem 1.1 since $L^2(\R^3)$ to $L^2_s(\R^3)$ Sobolev improvement is equivalent to the corresponding statement concerning the decay rate of 
$|\hat{\mu}(\lambda)|$.

\subsection{Examples}

\noindent {\bf Example.} Suppose there is just one $h_j(x,y)$, given by $h_1(x,y) = f(x,y)$, and $\phi(x,y) = |f(x,y)|^{\rho}$. We write the $m_D(t)$ of $(1.6)$ as $m_{\rho,D}(t)$ to include the dependence on $\rho$. Note that $ m_{0,D}(t)$ is just the measure of the points in $D$ for which $|f(x,y)| < t$. Observe that
\[ m_{\rho,D}(t) = \int_0^t u^{\rho} d m_{0,D}(u) \tag{1.10}\]
Thus if the main term of the asymptotics of $m_{0,D}(t)$ is of the form $A_D t^{\eta}|\ln t|^l$, then 
the main term of the asymptotics of $m_{\rho,D}(t)$ will be of the form $B_{D,\rho}t^{\eta + \rho}|\ln t|^l$. Hence if $\eta + \rho < \frac{1}{3}$, we will be in the setting of 
part 1a of Theorem 1.1  and the corresponding portion of Theorem 1.2, both of which will be sharp by the second parts of Theorem 1.1 and 1.2. On the other hand, if $\eta + \rho \geq \frac{1}{3}$ we will be in the settings of parts 1b and 1c of Theorem 1.1.

Thus we see that if $\eta + \rho < \frac{1}{3}$, the rate of decrease of $|\hat{\mu}(\lambda)|$ in this example is readily describable in terms of the measure of the points in $D$ for which $|f(x,y)| < t$.

\noindent {\bf An example related to the condition $\eta <{ 1 \over 3}$}

Consider the situation where $D \cap E = \{(x,y) \in D: x > 0,\, x^3 < y < 2x^3\}$, $f(x,y) = x^2$,
and there are two $h_j(x,y)$, given by $h_1(x,y) = x$ and $h_2(x,y) = y - x^3$.
We make no restrictions on $\gamma_1$ for now, and let $\gamma_2 = -1 + \delta$ for some small $\delta$.
 Let $\alpha(x,y) = 1$. Then we have
\[\hat{\mu}(\lambda)  = \int_{D \cap E}  x^{\gamma_1}(y- x^3)^{-1 + \delta}e^{-i\lambda_3 x^2 -i\lambda_1 x  - i\lambda_2y}\,dx\,dy  \tag{1.11}\]
Changing variables from $(x, y)$ to $(x,y + x^3)$, we get
\[\hat{\mu}(\lambda)  = \int_{\{(x,y) \in \tilde{D}:\, x > 0, \,\,0 < y < x^3\}} x^{\gamma_1}y^{-1 + \delta}e^{-i\lambda_3 x^2 - i\lambda_1 x - i\lambda_2x^3  -  i\lambda_2y}\,dx\,dy \tag{1.12}\]
Here $\tilde{D}$ is the disk $D$ in the transformed coordinates.
We look at $\hat{\mu}(\lambda)$ on rays $(\lambda_1, c_1\lambda_1, c_2\lambda_1)$ for fixed $c_1$ and $c_2$.
Then $(1.12)$ becomes
$$\hat{\mu} (\lambda_1, c_1\lambda_1, c_2\lambda_1) = \int_{\{(x,y) \in \tilde{D}:\, x > 0, \,\,0 < y < x^3\}}  x^{\gamma_1}y^{-1+\delta}
e^{-i\lambda_1 (x + c_2 x^2 + c_1 x^3)} e^{-i\lambda_1 c_1 y}\,dx\,dy \eqno (1.13)$$ When $\delta$ is very small, the $y^{ -1 + \delta}$ factor ensures that one gets very little decay in $(1.13)$ coming from the $\lambda_1 c_1y$ 
term in the exponential; the behavior is driven by the $x$ integral for fixed values of $y$. Let $r$ denote the inradius of
the disk-like $\tilde{D}$. If one chooses $c_1$ and $c_2$ so that
$x + c_2 x^2 + c_1 x^3$  has a stationary point of order 3 at some $x_0$ satisfying  ${r \over 2} < x_0 < r$, then the
best estimate one can get is $|\hat{\mu} (\lambda_1, c_1\lambda_1, c_2\lambda_1)| \leq C|\lambda_1|^{-{1 \over 3} - \delta '}$, where
$\delta '$ approaches zero as $\delta$ approaches zero. On the other hand, one may choose $\gamma_1$ so that the parameter 
$\eta$ is any given value greater than ${1 \over 3}$. 

This example illustrates the origin of the exponent ${1 \over 3}$ in the proof of part 1 of Theorem 1.1 (and therefore the corresponding aspects of Theorem 1.2), and also explains 
why one needs an additional condition if part 1 of Theorem 1.1 is to hold for exponents greater than ${1 \over 3}$. It is conceivable that the compatibility
condition $(1.8)$ used in this paper might lead to a better exponent than $\frac{1}{3}$, but this is unknown to the author. For the case of smooth $\phi(x,y)$, the papers  [K1] and [K2] show that there are no restrictions at all on $\eta$ and $(1.9a)$ always holds.

\section {Van der Corput lemmas}

\noindent We start with the well-known Van der Corput lemma (see p. 334 of [S]).

\begin{lemma}  Suppose $h(x)$ is a $C^k$ function on the interval $[a,b]$ with $|h^{(k)}(x)| > A$ on $[a,b]$ for
some $A > 0$. Let $\phi(x)$ be $C^1$ on $[a,b]$. 

\noindent If $k \geq 2$ there is a constant $c_k$ depending only on $k$ such that
$$\bigg|\int_a^b e^{ih(x)}\phi(x)\,dx\bigg| \leq c_kA^{-{1 \over k}}\bigg(|\phi(b)| + \int_a^b |\phi'(x)|\,dx\bigg) \eqno (2.1)$$
If $k =1$, the same is true if we also assume that $h(x)$ is $C^2$ and $h'(x)$ is monotone on $[a,b]$. 
\end{lemma}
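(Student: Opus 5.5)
The plan is the classical one: first prove the estimate with $\phi \equiv 1$, then pass to general $\phi$ by integration by parts.

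\textbf{Step 1: the case $\phi \equiv 1$.} The claim is
$$\bigg|\int_c^d e^{ih(x)}\,dx\bigg| \leq c_k A^{-1/k}$$
for every subinterval $[c,d] \subset [a,b]$.

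For $k=1$, write
$$e^{ih} = \frac{1}{ih'}\,\big(e^{ih}\big)'$$
and integrate by parts. The boundary terms are bounded by $2/A$. The remaining integral is bounded by $\int_c^d |(1/h')'|$. Since $h'$ is monotone and does not change sign (because $|h'| > A$), the function $1/h'$ is monotone. This integral therefore telescopes to $|1/h'(d) - 1/h'(c)| \leq 1/A$, which gives $c_1 = 3$.

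For $k \geq 2$, induct on $k$. Since $|h^{(k)}| > A$, the function $h^{(k-1)}$ is strictly monotone, so $|h^{(k-1)}| \leq \delta$ on a single subinterval of length at most $2\delta/A$. Outside that subinterval we have at most two intervals on which $|h^{(k-1)}| > \delta$. When $k-1 = 1$, $h'$ is monotone there because $h'' \neq 0$. Apply the inductive hypothesis on these pieces to get the bound
$$2 c_{k-1}\delta^{-1/(k-1)} + \frac{2\delta}{A}.$$
Choosing $\delta = A^{(k-1)/k}$ balances the two terms and gives $C A^{-1/k}$.

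\textbf{Step 2: general $\phi$.} Set
$$F(x) = \int_a^x e^{ih(t)}\,dt.$$
Integrating by parts,
$$\int_a^b e^{ih}\phi = F(b)\phi(b) - \int_a^b F(x)\phi'(x)\,dx.$$
Step 1 gives $|F(x)| \leq c_k A^{-1/k}$ uniformly in $x$, so the whole expression is bounded by
$$c_k A^{-1/k}\Big(|\phi(b)| + \int_a^b |\phi'|\Big),$$
which is $(2.1)$.

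\textbf{Main obstacle.} The only delicate point is in Step 1. The uniform constant $c_k$ must not depend on the interval, and the monotonicity of $h'$ is required in the $k=1$ case. In the induction this monotonicity is automatic, since $h'' \neq 0$ makes $h'$ monotone. This is why the lemma assumes $h \in C^2$ with $h'$ monotone when $k=1$.
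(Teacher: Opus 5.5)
Your proof is correct and is the standard argument from Stein's book (pp.~332--334), which is exactly what the paper cites for this lemma without proving it: you first prove the amplitude-free bound (one integration by parts for $k=1$, then induction via the sublevel interval $\{|h^{(k-1)}|\le\delta\}$ of length at most $2\delta/A$ with $\delta=A^{(k-1)/k}$), and then integrate by parts against $F(x)=\int_a^x e^{ih(t)}\,dt$. The only cosmetic point is that on the closed outer pieces you have $|h^{(k-1)}|\ge\delta$ rather than $>\delta$, which is harmless: apply the inductive bound with $\delta-\epsilon$ and let $\epsilon\to 0$.
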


\noindent  We will also make use of the following variant of Lemma 2.1 for $k = 1$, which was proven in [G3].

\begin{lemma} (Lemma 2.2 of [G3])  Suppose the hypotheses of Lemma 2.1 hold with $k = 1$, except  instead of assuming that $h'(x)$ is monotone on $[a,b]$ we assume that $|h''(x)|
< {B  \over(b-a)}A$ for some constant $B > 0$. Then we have
$$\bigg|\int_a^b e^{ih(x)}\phi(x)\,dx\bigg| \leq  A^{-1}\bigg(\int_a^b |\phi'(x)|\,dx+ (B+2) 
\sup_{[a,b]}|\phi(x)| \bigg) \eqno (2.2)$$
\end{lemma}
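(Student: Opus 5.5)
We prove Lemma 2.2 by one integration by parts against the oscillatory factor, written as a derivative. Since $|h'(x)| > A > 0$ on $[a,b]$ and $h'$ is continuous, $h'$ has constant sign. So we may write $e^{ih(x)} = \frac{1}{ih'(x)}\frac{d}{dx}e^{ih(x)}$ and integrate by parts:
\[
\int_a^b e^{ih(x)}\phi(x)\,dx = \Big[\frac{e^{ih(x)}\phi(x)}{ih'(x)}\Big]_a^b - \int_a^b e^{ih(x)}\frac{d}{dx}\Big(\frac{\phi(x)}{ih'(x)}\Big)\,dx .
\]
The boundary terms are bounded by $A^{-1}(|\phi(a)|+|\phi(b)|) \leq 2A^{-1}\sup_{[a,b]}|\phi|$. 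This accounts for the ``$2$'' in the constant $B+2$.

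Next we expand the derivative inside the remaining integral:
\[
\frac{d}{dx}\frac{\phi}{h'} = \frac{\phi'}{h'} - \frac{\phi h''}{(h')^2}.
\]
The first piece contributes at most $A^{-1}\int_a^b|\phi'|$. For the second piece we use the hypothesis $|h''| < \frac{B}{b-a}A$ together with $|h'|>A$, which give
\[
\int_a^b \frac{|\phi||h''|}{|h'|^2}\,dx \leq \sup|\phi|\cdot (b-a)\cdot \frac{BA/(b-a)}{A^2} = BA^{-1}\sup|\phi| .
\]
Summing the three contributions gives exactly $(2.2)$.

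The main point is that the monotonicity of $h'$ in Lemma 2.1 with $k=1$ is used only to control $\int|(1/h')'|$ through telescoping. Here that control is replaced by the pointwise bound on $h''$, and the factor $(b-a)$ from the length of the interval cancels the $1/(b-a)$ in the hypothesis. Regularity is not an issue: $h$ is $C^2$ and $\phi$ is $C^1$, so the integration by parts is justified. The only step needing care is the cancellation of the $(b-a)$ factors.
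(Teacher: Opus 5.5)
Your proof is correct: the single integration by parts against $\frac{1}{ih'}\frac{d}{dx}e^{ih}$ bounds the boundary terms by $2A^{-1}\sup_{[a,b]}|\phi|$, and the term $\phi h''/(h')^2$ contributes at most $(b-a)\cdot \frac{BA}{b-a}\cdot A^{-2}\sup_{[a,b]}|\phi| = BA^{-1}\sup_{[a,b]}|\phi|$, which is exactly how the constant $B+2$ in $(2.2)$ arises. The paper gives no proof of this lemma and simply cites Lemma 2.2 of [G3]; your argument is the standard one, and it produces precisely that constant.
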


In section 4 we will also use the following two-dimensional mixed-derivative version of the Van der Corput Lemma from [G1].

\begin{lemma} Let $I_1$ and $I_2$ be closed intervals of lengths $l_1$ and $l_2$ respectively, and for some 
strictly monotone functions $f_1(x)$ and $f_2(x)$ on $I_1$ with $f_1(x) \leq f_2(x)$ let $R = \{(x,y) \in I_1 \times I_2: f_1(x) \leq y \leq  f_2(x )\}$ (Note $R$ might just be $I_1 \times I_2$). Suppose for some $k \geq 2$, $P(x,y)$ is a $C^k$ function on $R$ such that for each $(x,y) \in R$ one has
$$|\partial_{xy} P(x,y)| > M\,\,\,\,\,\,\,\,\,\,\,\,\,\,\,\,{\rm and }\,\,\,\,\,\,\,\,\,\,\,\,\,\,\,
\partial_y^k P(x,y) \neq 0 \eqno (2.3)$$
Further suppose that $\Psi(x,y)$ is a function on $R$ that is $C^1$ in the $y$ variable for fixed $x$,  such that 
$$ |\Psi(x,y)| < N \,\,\,\,\forall x,y\,\,\,\,\,\,\,\,\,\,\,\,\,\,\,{\rm and }\,\,\,\,\,\,\,\,\,\,\,\,\,\,\,\,\int_{\{y: (x,y) \in R\}} |\partial_y\Psi(x,y)|\,dy< N \,\,\,\,\forall x \eqno (2.4)$$
If  $R' \subset R$ such that the intersection of $R'$ with each vertical line is either empty or is a set of at most $l$ intervals, then 
the following estimate holds. 
$$\bigg|\int_{R'} e^{i P(x,y)}\Psi(x,y)\,dx\,dy\bigg| < C_{kl}  N \bigg({l_1l_2 \over M}\bigg)^{1 \over 2} \eqno (2.5)$$
\end{lemma}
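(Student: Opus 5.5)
We prove Lemma 2.3 by integrating first in $y$ for each fixed $x$, and then exploiting the lower bound on $|\partial_{xy}P|$ in the $x$ variable. For fixed $x$, write the slice $\{y:(x,y)\in R\}$ as an interval $J_x=[f_1(x),f_2(x)]\cap I_2$, and the slice of $R'$ as at most $l$ subintervals of $J_x$.

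The first point is that $\partial_y^k P\neq 0$ on $R$ implies that $y\mapsto \partial_y P(x,y)$ has at most $k-1$ zeros of $\partial_y^2P$ on $J_x$. Hence $J_x$ splits into at most $k-1$ intervals on which $\partial_yP(x,\cdot)$ is monotone. Refining by the at most $l$ intervals of the $R'$ slice gives at most $C_{kl}$ intervals on which $\partial_yP$ is monotone. We then fix a threshold $\delta>0$, to be chosen below.

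For fixed $y$, the condition $|\partial_x(\partial_yP)|>M$ and continuity give that $\partial_yP(\cdot,y)$ is strictly monotone in $x$ with derivative of size at least $M$. It follows that for each $y$ the set $\{x\in I_1: |\partial_yP(x,y)|<\delta\}$ is an interval of length at most $2\delta/M$. By Fubini, the set $B=\{(x,y)\in R': |\partial_yP|<\delta\}$ therefore has measure at most $2\delta l_2/M$, so its contribution to the integral is at most $2N\delta l_2/M$.

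On the complement, for fixed $x$ the set $\{y: |\partial_yP(x,y)|\ge\delta\}$ intersected with each monotonicity interval of $\partial_yP$ is at most two intervals. Hence the $y$-slice of $R'\setminus B$ is a union of $C_{kl}$ intervals, on each of which $|\partial_yP|\ge\delta$ and $\partial_yP$ is monotone. Lemma 2.1 with $k=1$ then bounds each such $y$-integral by $c\,\delta^{-1}(|\Psi(\text{endpoint})|+\int|\partial_y\Psi|)\le c\,\delta^{-1}\cdot 2N$. Summing over the intervals and integrating over $x\in I_1$ gives a bound $C_{kl}N l_1/\delta$.

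Combining the two pieces yields a total bound of $C(N\delta l_2/M + Nl_1/\delta)$. Choosing $\delta=(Ml_1/l_2)^{1/2}$ gives $C_{kl}N(l_1l_2/M)^{1/2}$, which is (2.5).

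The main obstacle is the combinatorial bookkeeping showing that the number of $y$-intervals on which $|\partial_yP|\ge\delta$ with $\partial_yP$ monotone is uniformly bounded by $C_{kl}$, independent of $x$. This count is what the hypothesis $\partial_y^kP\neq0$ supplies, through Rolle's theorem applied to $\partial_y^2P,\dots,\partial_y^kP$. A further subtlety is the sign issue in $\partial_{xy}P$: since it is continuous and bounded away from $0$ on the connected set $R$, it has constant sign, which justifies the one-dimensional measure bound in $x$. The monotonicity of $f_1,f_2$ only ensures that the slices are intervals, so Fubini applies cleanly.
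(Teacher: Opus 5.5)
Your proof is correct. The paper does not prove this lemma itself; it quotes it from [G1]. Your argument is the standard one that the hypotheses are built for: split $R'$ according to whether $|\partial_yP|<\delta$, bound that sublevel set in $x$ using $|\partial_{xy}P|>M$, apply the first-derivative Van der Corput lemma in $y$ on the rest, and take $\delta=(Ml_1/l_2)^{1/2}$.

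One correction to your closing remarks. Vertical slices of $R$ are intervals no matter what. What the monotonicity of $f_1,f_2$ actually buys is that the horizontal slices $\{x:(x,y)\in R\}$ are intervals. That is exactly what makes $\partial_yP(\cdot,y)$ monotone there and gives the $2\delta/M$ bound line by line. Consequently you never need $R$ to be connected, which can in fact fail if $f_1$ or $f_2$ is discontinuous.
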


\section { Resolution of singularities in two dimensions}

We will make use of the real analytic case of the resolution of singularities theorem of [G6], which is as follows. Let $S_1(x,y),...,S_k(x,y)$ be real analytic functions on a neighborhood of
 the origin with $S_j(0,0) = 0$ for each $j$, with no $S_j(x,y)$ identically zero. Let $S(x,y) = \prod_{j =1}^k S_j(x,y)$ and write  $S(x,y) = 
\sum_{\alpha,\beta} S_{\alpha\beta}x^{\alpha}y^{\beta}$.

Divide the
$xy$ plane into eight triangles by slicing the plane using  the $x$ and $y$ axes  and two lines through the origin of the form $y = \pm mx$ for some $m > 0$.
 One must ensure that these two lines are not ones  on 
which the function $\sum_{\alpha + \beta = o} S_{\alpha\beta}x^{\alpha}y^{\beta}$ vanishes other than at the
origin, where $o$ denotes the order of the zero of $S(x,y)$ at the origin.  After reflecting about the $x$ and/or $y$ axes and/or the line $y = x$ if necessary, each of the triangles becomes of the form $T_b = \{(x,y) \in \R^2: x > 0,\,0 < y < bx\}$ (modulo an inconsequential boundary set of measure zero). We will make use of the following result from [G2].

\begin{theorem}  (Theorem 2.2 of [G2]) Let  $T_b = \{(x,y) \in \R^2: x > 0,\,0 < y < bx\}$ be as above. Abusing notation slightly, use the notation $S_j(x,y)$ to denote the reflected function $S_j(\pm x,\pm y)$ or $S_j(\pm y, \pm x)$ corresponding to $T_b$.
 Then there is an $a > 0$ and a positive integer $N$ such that
if $F_a$ denotes  $\{(x,y) \in \R^2: 0 \leq x\leq a, \,0 \leq y \leq bx\}$, then one can write $F_a = \cup_{i=1}^n cl(D_i)$, such that for each $i$ there is a $k_i(x) = p_i x$ or $k_i(x) = p_i x + l_i x^{s_i} + ...$ with $k_i(x^N)$ real analytic, $l_i \neq 0$, and $s_i > 1$, such that after a coordinate change of the form $\zeta_i(x,y) = (x, \pm y + k_i(x))$, the set $D_i$ becomes a set $D_i'$ on which each function $S_j \circ \zeta_i(x,y)$ approximately becomes a monomial $d_{ij} x^{\alpha_{ij}}y^{\beta_{ij}}$, $\alpha_{ij}$ a nonnegative rational number and $\beta_{ij}$ a nonnegative integer in the following sense.

\begin{enumerate}

\item $D_i' = \{(x,y): 0 < x < a, \, g_i(x) < y < G_i(x)\}$, where $g_i(x^N)$ and $G_i(x^N)$ are
real analytic. If we expand $G_i(x) =  H_i x^{M_i} + ...$, then $M_i \geq 1$ and $H_i > 0$. The function $g_i(x)$ is either identically zero or $g_i(x) = 
h_ix^{m_i} + ...$ where $h_i > 0$ and $m_i > M_i$.

\item   The $D_i'$ can
be constructed such that for any predetermined $\delta > 0$ there is a $d_{ij} \neq 0$ such that on $D_i'$, for all $0 \leq l \leq \alpha_{ij}$ and all $0 \leq  m \leq \beta_{ij}$ one has
\[|\partial_x^l\partial_y^m(S_j \circ \zeta_i)(x,y) -  \alpha_{ij}(\alpha_{ij} - 1)...(\alpha_{ij} - l + 1)\beta_{ij}(\beta_{ij} - 1)...(\beta_{ij} - m + 1)
d_{ij}x^{\alpha_{ij} - l}y^{\beta_{ij} - m}| \]
\[\leq \delta |d_{ij}| x^{\alpha_{ij} - l}y^{\beta_{ij} - m} \tag{3.1}\]
Here the collection of $s_i$ and $(\alpha_{ij},\beta_{ij})$ is independent of $\delta$.

\end{enumerate}
 
\end{theorem}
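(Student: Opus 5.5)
The plan is to prove the theorem by an iterated Newton polygon (Puiseux-type) subdivision of $T_b$, treating all the $S_j$ at once through their product $S$. The key fact is that the Newton polygon of $S$ is the Minkowski sum of the Newton polygons of the $S_j$. Hence on any region where a single vertex monomial of $S$ dominates all other terms, the corresponding vertex monomial of each $S_j$ also dominates.

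\emph{Step 1: first-level decomposition.} Write $S=\sum S_{\alpha\beta}x^\alpha y^\beta$ on $T_b$. Let the compact edges of the Newton polygon have slopes $-1/s$, so each edge corresponds to the curves $y = c x^{s}$. Restricting to $T_b$ only matters for $s \geq 1$, since $y<bx$. Each edge has an edge polynomial $p_e(c)$, and the choice of the lines $y=\pm mx$ guarantees that the $s=1$ edge polynomial has no root at $c=b$.

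For small $\epsilon$ and large $K$, split $F_a$ into three kinds of regions:
\begin{enumerate}
\item regions $K x^{s_{e'}} < y < \epsilon x^{s_e}$, lying strictly between the curves of consecutive edges;
\item regions $\{c_1 x^{s_e}<y<c_2 x^{s_e}\}$ where $[c_1,c_2]$ avoids the positive roots of $p_e$;
\item thin regions $|y - r x^{s_e}| < \epsilon' x^{s_e}$ around each positive root $r$ of $p_e$.
\end{enumerate}
In regions of the first kind, one vertex monomial dominates all other terms of $S$ and of every $S_j$. In regions of the second kind, the quasi-homogeneous edge part is nonvanishing, so each $S_j$ is comparable to $x^{\alpha}$ times a nonvanishing function of $y/x^s$. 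Refining $[c_1,c_2]$ finely enough, the ratio $y/x^{s}$ is essentially constant on each piece. Writing such a piece as $g_i<y<G_i$ with $g_i(x)=c_1x^{s}$, $m_i=M_i$ is not allowed by item 1, so I would use the shift $\zeta_i(x,y)=(x,y+c_1x^s)$ to make the lower boundary $0$. In the new coordinates $S_j\circ\zeta_i\approx d_{ij}x^{\alpha_{ij}}$ with $\beta_{ij}=0$.

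\emph{Step 2: recursion near roots.} For a root $r$ of $p_e$, apply the coordinate change $y\mapsto \pm y + r x^{s}$; the sign $\pm$ handles the parts above and below the curve. Then repeat the Newton polygon analysis for the new functions, in fractional powers of $x$, on the region $0<y<\epsilon' x^{s}$. Successive shifts accumulate into $k_i(x)=p_ix + l_ix^{s_i}+\dots$, a truncated Puiseux series, so $k_i(x^N)$ is real analytic for $N$ a common denominator.

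Termination follows from the standard Puiseux argument. The multiplicity of the root $r$ is at most the height of the relevant edge, and that height does not increase under the shift. It strictly decreases unless the edge collapses to a single root of full multiplicity, and in that case the process converges to a genuine Puiseux branch of $S$. Such a branch can be taken as $k_i$ with $g_i\equiv 0$: there $S_j\circ\zeta_i \approx d_{ij}x^{\alpha_{ij}}y^{\beta_{ij}}$, where $\beta_{ij}$ is the multiplicity of that branch for $S_j$, an integer because the shifted function is analytic in $y$. Since $\alpha_{ij}$ arises from exponents in $x^{1/N}$, it is rational.

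\emph{Step 3: derivative estimates (3.1).} In each final region $0<x<a$, $g_i<y<G_i$, each function is a finite sum of a dominant monomial plus terms that are small relative to it by a factor $O(\epsilon+K^{-1}+a^{1/N})$. I would use the rescaling $x\to tx$, $y\to t^{M}y$ (or, near $g_i$, the scales $x^{m_i}$) to see that each $x$-derivative costs a factor $x^{-1}$ and each $y$-derivative costs $y^{-1}$ on the region, uniformly for the error terms. This requires the region to be comparable to a product in these scales, which holds because $m_i>M_i$ or $g_i\equiv 0$.

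\emph{Main obstacle.} I expect the main obstacle to be this uniformity. Error terms of the form $x^{\alpha'}y^{\beta'}$ with $\beta'<\beta_{ij}$ could dominate after many $y$-derivatives. I would handle this by shrinking $\epsilon$ and $a$ depending on $\delta$ while keeping the combinatorial data $s_i,\alpha_{ij},\beta_{ij}$ fixed. This is why the theorem states that these exponents are independent of $\delta$.
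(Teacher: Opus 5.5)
Your outline is correct and is essentially the argument behind this result. The paper does not prove the theorem itself but quotes it from [G2], where it is obtained by the same scheme you describe: an iterated Newton polygon subdivision of the product $S=\prod_j S_j$, translations $y\mapsto \pm y+rx^{s}$ at the real roots of edge polynomials, termination through decreasing root multiplicity or an exact Puiseux branch, and monomial domination on each final region. One correction: your ``main obstacle'' is not an obstacle. Applying $\partial_x^l\partial_y^m$ multiplies the ratio of any term $x^{\alpha'}y^{\beta'}$ to $x^{\alpha_{ij}}y^{\beta_{ij}}$ only by a falling-factorial constant, and terms with $\beta'<m$ vanish; the restriction $l\le\alpha_{ij}$, $m\le\beta_{ij}$ only keeps the main coefficient nonzero. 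So $(3.1)$ follows from Cauchy estimates on polydiscs of radii comparable to $x$ and $y$, once domination holds on a slightly enlarged region.
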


It should be pointed out that the development of this theorem was influenced by the philosophy of [PS] where one divides a neighborhood of the origin into wedges on which $S(x,y)$ and its derivatives are well-behaved.

We also will make use of the following corollary to Theorem 3.1 that was proven in [G2].

\begin{corollary} For any predetermined $\eta > 0$ and positive integer $K$,  the $D_i'$ can be constructed, depending on $K$ and $\eta$, so that $(3.1)$  holds for all $l, m < K$, where the set of all  $s_i$ and $(\alpha_{ij},\beta_{ij})$ is independent of $\delta$ and $K$.
\end{corollary}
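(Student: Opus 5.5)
The plan is to go back into the construction behind Theorem 3.1 rather than use its conclusion as a black box. The aim is to show that the mechanism giving (3.1) for $l \leq \alpha_{ij}$, $m \leq \beta_{ij}$ also gives it for all $l, m < K$, at the price of shrinking $a$ and refining the wedges in a way that depends on $K$ and $\delta$.

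The starting point is that the coordinate change $\zeta_i(x,y) = (x, \pm y + k_i(x))$ has $k_i(x^N)$ real analytic. Hence each $S_j \circ \zeta_i(x,y)$ has a convergent expansion
\[\sum_{p,q} c_{pq}\, x^{p/N} y^q,\]
with $q$ a nonnegative integer, valid on a neighborhood of $cl(D_i')$. The construction of the $D_i'$ in [G2] is designed so that one term, $d_{ij} x^{\alpha_{ij}} y^{\beta_{ij}}$, dominates on $D_i'$. Concretely, every other term satisfies
\[|c_{pq}\, x^{p/N} y^q| \leq \epsilon_{pq}\, |d_{ij}|\, x^{\alpha_{ij}} y^{\beta_{ij}},\]
where $\epsilon_{pq}$ is a product of powers of the three small quantities $x$, $y/G_i(x)$ and $g_i(x)/y$. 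These are exactly the quantities made small by taking $a$ small and by cutting each wedge along curves $y = c\, x^{M}$ with $c$ near the relevant edge.

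Next I differentiate term by term. For $l, m < K$, the quantity $\partial_x^l\partial_y^m (x^{p/N}y^q)$ is either zero (when $q < m$, since $q$ is an integer) or a coefficient of size $O((p+q+1)^{2K})$ times $x^{p/N - l}y^{q-m}$. Dividing by $x^{\alpha_{ij} - l}y^{\beta_{ij} - m}$ gives back the same ratio $x^{p/N - \alpha_{ij}}y^{q - \beta_{ij}}$ as before differentiation. So each non-main term contributes at most
\[O\big((p+q+1)^{2K}\big)\, \epsilon_{pq}\, |d_{ij}|\, x^{\alpha_{ij}-l}y^{\beta_{ij}-m}.\]
The main term contributes exactly the falling-factorial expression in (3.1); when that expression is zero, for instance $l > \alpha_{ij}$ with $\alpha_{ij}$ an integer, nothing needs to be matched. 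Because the series converges on a slightly larger region, the polynomial weights $(p+q+1)^{2K}$ are absorbed by the geometric decay of $\epsilon_{pq}$ once the ratios are at most a fixed fraction of their maximum. The total error is then at most $\delta |d_{ij}| x^{\alpha_{ij}-l}y^{\beta_{ij}-m}$ after shrinking $a$ and refining the wedges depending on $K$ and $\delta$. Refinement only splits a $D_i'$ into subwedges with the same $k_i$ and the same dominant monomial, so the sets of $s_i$ and $(\alpha_{ij},\beta_{ij})$ do not change, as the corollary requires.

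The main obstacle is the edges $y \approx G_i(x)$ and, when $g_i \not\equiv 0$, $y \approx g_i(x)$. There a neighboring monomial in the Newton polygon is comparable to the main one, so $\epsilon_{pq}$ is not small. Before differentiation, [G2] handles this by assigning those boundary strips to adjacent $D_{i'}$, where a different term dominates or a further shift $k_{i'}$ is used. I would check that this same subdivision works uniformly in $K$: it only requires the strips to be thin enough that the ratio of neighbouring monomials is below a threshold, and by the computation above that threshold can be chosen depending on $K$ and $\delta$ without introducing new exponents.
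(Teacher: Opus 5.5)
The paper gives no proof of this corollary; it is quoted from [G2]. So reopening the construction is the natural route, and your central computation is correct. After dividing by $x^{\alpha_{ij}-l}y^{\beta_{ij}-m}$, $\partial_x^l\partial_y^m(x^{p/N}y^q)$ gives back the undifferentiated ratio $x^{p/N-\alpha_{ij}}y^{q-\beta_{ij}}$ times the weight $(p/N)(p/N-1)\cdots(p/N-l+1)\,q(q-1)\cdots(q-m+1)$. Termwise domination therefore transfers to derivatives of order less than $K$.

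Your account of where the smallness comes from is wrong, though. The ratios $y/G_i(x)$ and $g_i(x)/y$ are not small on $D_i'$; they tend to $1$ at the two curved edges. The only terms with no spare power of $x$ are the finitely many terms $c_{pq}x^{p/N}y^q$, other than the main one, lying on one of two lines:
\begin{itemize}
\item the line $p/N+M_iq=\alpha_{ij}+M_i\beta_{ij}$, where all such terms have $q>\beta_{ij}$ and ratio $|c_{pq}/d_{ij}|\,(y/x^{M_i})^{q-\beta_{ij}}$;
\item the line $p/N+m_iq=\alpha_{ij}+m_i\beta_{ij}$, where all such terms have $q<\beta_{ij}$ and ratio $|c_{pq}/d_{ij}|\,(x^{m_i}/y)^{\beta_{ij}-q}$.
\end{itemize}
These ratios are small because the leading coefficient $H_i$ of $G_i$ is small and the leading coefficient $h_i$ of $g_i$ is large. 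Every other term carries a positive power of $x$, so its polynomial weight is absorbed by shrinking $a$. Hence there is no ``edge obstacle'' inside $D_i'$. The entire cost of $K$ is a constant factor on those finitely many terms, and it is paid by making $H_i$ smaller and $h_i$ larger.

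The genuine gap is in the half of the statement you do not prove: that the $s_i$ and $(\alpha_{ij},\beta_{ij})$ do not depend on $K$. Your mechanism is that refinement only splits $D_i'$ into subwedges with the same $k_i$ and the same dominant monomial. That fails.
\begin{itemize}
\item $H_i$ needs lowering to some $H_i'$ only when the first line above carries other terms. On the vacated strip $\{H_i'x^{M_i}<y<H_ix^{M_i}\}$ those terms are not small compared with $x^{\alpha_{ij}}y^{\beta_{ij}}$.
\item So the strip cannot be covered by pieces keeping $k_i$ and $(\alpha_{ij},\beta_{ij})$ while satisfying (3.1) for small $\delta$ and all $l,m<K$.
\item Nor can it be handed to an adjacent piece without spoiling that piece's estimate.
\item It has to be re-resolved with shifts $k_i(x)+c\,x^{M_i}$, on which the dominant monomial changes (typically to $x^{\alpha_{ij}+M_i\beta_{ij}}y^0$).
\end{itemize}
Your last paragraph only promises to check that this introduces no new exponents.

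The missing observation is this. Which Newton-polygon edges occur, and which roots of edge polynomials (with what multiplicities) occur in the iterated coordinates, is independent of the tuning constants $a$, $H_i$, $h_i$ and the widths of the shifted pieces. That combinatorial data alone determines the $s_i$ and $(\alpha_{ij},\beta_{ij})$. Both $\delta$ and $K$ enter only through the tuning constants, and this is what the last sentence of Theorem 3.1 records for $\delta$. The clean finish is therefore to rerun the construction with constants chosen in terms of both $\delta$ and $K$, so that the weighted edge terms total at most $\delta/2$. Then shrink $a$ so the weighted tail is at most $\delta/2$. This replaces editing the pieces by hand.
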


\section{A key estimate}

We apply Theorem 3.1 and Corollary 3.1.1 to the case where the collection of $S_j(x,y)$ are $f(x,y)$,  the functions $q_j(x,y)$ used to define $D \cap E$,  and the
 functions $h_j(x,y)$ used in the definition of the weight function $\phi(x,y)$. 

\noindent Write the integral $\hat{\mu}(\lambda_1,\lambda_2,\lambda_3)$ in $(1.5)$ as a finite sum $\sum_i U_i(\lambda_1,\lambda_2,\lambda_3)$, where 
\[U_i(\lambda_1,\lambda_2,\lambda_3) = \int_{D_i} e^{-i\lambda_3f( x, y) -  i\lambda_1 x - i\lambda_2 y}\,\phi(x,y)\,dx\,dy \tag{4.1}\]
Here $D_i$ is the domain coming from Theorem 3.1 in the current setting. Technically, $f(x,y)$ should be $f(\pm x, \pm y)$ or $f(\pm y, \pm x)$ and so on, but this does not affect the statements or proofs here. We have that either $D_i \subset E$ or $D_i \cap E = \emptyset$; since every $q_j(x,y)$ is comparable to a monomial on $D_i$,
it is either positive everywhere on $D_i$ or negative everywhere on $D_i$. If every $q_j(x,y)$ is positive on $D_i$,  then $D_i \subset E$, and otherwise  $D_i \cap E = \emptyset$.
We ignore the latter $D_i$ since $(4.1)$ is zero for them and focus our attention on bounding $(4.1)$ for the $i$ for which each $q_j(x,y)$ is positive on $D_i$ and is thus comparable to a monomial there.

We let $\zeta_i$ be the coordinate change of Theorem 3.1 and perform this coordinate change on $(4.1)$, obtaining
\[U_i(\lambda_1,\lambda_2,\lambda_3) = \int_{D_i'} e^{-i\lambda_3f_i( x, y) - i\lambda_1 x \pm i\lambda_2 y - i\lambda_2 k_i(x)} \,\phi_i(x,y)\,dx\,dy \tag{4.2}\]
Here $D_i'$ is $D_i$ in the new coordinates, 
$f_i$ denotes $f \circ \zeta_i$, and $\phi_i$ denotes $\phi \circ \zeta_i$. The function $k_i(x)$ is the $y$ translation caused by the variable
change when $x$ is fixed. To simplify notation, without loss of generality we will always assume the $\pm i\lambda_2 y$ term is of the 
form $- i\lambda_2 y$.
Let $h_{ij}(x,y)$ denote the function $h_j \circ \zeta_i(x,y)$; that is, the function $h_j(x,y)$ in the new coordinates. Theorem 3.1 says that one can write each $h_{ij}(x, y)$ in the form
 $d_{ij}x^{\alpha_{ij}}y^{\beta_{ij}}$ plus a smaller error term for some $d_{ij} \neq 0$, with similar expressions for its various partial derivatives.
Since $\phi(x,y) = \alpha(x,y) \prod_{j=1}^M|h_j(x,y)|^{\gamma_j} $ on the $D_i$ under consideration we therefore may write
\[\phi_i(x,y) =  \alpha(x, y+ k_i(x))\prod_{j=1}^M|d_{ij}x^{\alpha_{ij}}y^{\beta_{ij}} + error\,\,term|^{\gamma_j} \tag{4.3}\]
Thus we have
\[|\phi_i(x,y)| \leq C|x^{\alpha_{i1}}y^{\beta_{i1}} |^{\gamma_1}...|x^{\alpha_{iM}}y^{\beta_{iM}} |^{\gamma_M} \tag{4.4}\]
Observe that $\alpha(x, y+ k_i(x))$ is $C^1$ since $\alpha$ is and $k_i(x)$ vanishes to order at least $1$ at $x = 0$ by  Theorem 3.1. Then using Corollary 3.1.1 on the $h_j(x,y)$ we  have 
\[|\partial_{x} \phi_i(x,y)| \leq C{1 \over x} |x^{\alpha_{i1}}y^{\beta_{i1}} |^{\gamma_1}...|x^{\alpha_{iM}}y^{\beta_{iM}} |^{\gamma_M} \tag{4.5a}\]
\[|\partial_{y} \phi_i(x,y)| \leq C{1 \over y} |x^{\alpha_{i1}}y^{\beta_{i1}} |^{\gamma_1}...|x^{\alpha_{iM}}y^{\beta_{iM}} |^{\gamma_M} \tag{4.5b}\]
We now divide the integral $(4.2)$ dyadically in both $x$ and $y$. Letting $I_{lm} =  [2^{-l-1}, 2^{-l}] \times [2^{-m-1}, 2^{-m}]$, we write $U_i(\lambda_1,\lambda_2,\lambda_3) = \sum_{l,m}
U_{ilm}(\lambda_1,\lambda_2,\lambda_3)$, where
\[U_{ilm}(\lambda_1,\lambda_2,\lambda_3) = \int_{D_i'\cap I_{lm}} e^{-i\lambda_3 f_i(x,y) - i\lambda_1 x -  i\lambda_2 y
- i\lambda_2 k_i(x)}  \,\phi_i(x,y)\,dx\,dy \tag{4.6}\]
It is convenient for our arguments to combine the linear term in $k_i(x) = p_i x + l_i x^{s_i} + o(x^{s_i})$ with
the $x$ in the $\lambda_1 x$ term in $(4.6)$. So we replace $\lambda_1$ by $ \lambda_1 + p_i \lambda_2$, which will not affect any of our theorem statements. We define
$K_i(x) = k_i(x) - p_i x$, and $(4.6)$ becomes
\[U_{ilm}(\lambda_1,\lambda_2,\lambda_3) = \int_{D_i'\cap I_{lm}} e^{-i\lambda_3 f_i(x,y) - i\lambda_1 x - i\lambda_2 y
-  i\lambda_2 K_i(x)}  \,\phi_i(x,y)\,dx\,dy \tag{4.7}\]
Note that $K_i(x)$ may be the zero function. When $K_i(x)$ is nonzero, Theorem 3.1 tells us that it is of the following form, where $l_i \neq 0$ and $s_i > 1$.
\[ K_i(x) = l_i x^{s_i} + o(x^{s_i}) \tag{4.8a}\]
In what follows, we write $f_i(x,y)$ in the coordinates of $D_i'$ provided by Theorem 3.1 as 
\[ f_i(x,y) = d_i x^{\alpha_i} y^{\beta_i} + error \,\, term \tag{4.8b}\]
Throughout the remainder of the paper the constant $C$ denotes a constant independent of $\lambda$ which depends on $f(x,y)$, $\alpha(x,y)$, the $h_j(x,y)$, and the $q_j(x,y)$ defining $E$.

We will estimate $|U_{ilm}(\lambda_1,\lambda_2,\lambda_3)|$ through the use of Van der Corput lemmas in the coordinate
systems provided by Theorem 3.1. The key estimate that allows us to prove Theorem 1.1 is the following,

\noindent {\bf Theorem  4.1.} For each $i$ one has the estimate 
$$|U_{ilm}(\lambda_1,\lambda_2,\lambda_3)| \leq C\int_{I_{lm}}\min( 1, \max (|\lambda x|^{-1}, |\lambda y|^{-1}, |\lambda x^{\alpha_i}y^{\beta_i}|^{-{1 \over 3}}))
\prod_{j = 1}^M (x^{\alpha_{ij}}y^{\beta_{ij}})^{\gamma_j} \,dx\,dy \eqno (4.9)$$

\noindent {\bf Proof.} Note that we may always assume that $|\lambda| > 1$ since the case $|\lambda| \leq 1$ follows immediately from simply taking the absolute value of the 
integrand and integrating. The argument is broken into several cases, depending on $\lambda$ and the $s_i$ and $(\alpha_i,\beta_i)$ of $(4.8a)-(4.8b)$.

\noindent {\bf Case 1.} $|\lambda_3| \leq \frac{1}{2}|\lambda|$.

In this case either $|\lambda_1| \geq \frac{1}{4}|\lambda|$, $|\lambda_2| \geq \frac{1}{4}|\lambda|$, or both. Suppose $|\lambda_1| \geq \frac{1}{4}|\lambda|$ holds. 
Letting $-P(x,y)$ be the phase function in $(4.7)$, we have 
\[ {\partial P \over \partial x}(x,y) = \lambda_1 + \lambda_2 K_i'(x) + \lambda_3 {\partial f_i \over \partial x}(x,y) \tag{4.10}\]
Since $K_i(x)$ is either identically zero or has a zero of order $s_i > 1$ at $x = 0$ and $|\lambda_2| \leq |\lambda| \leq 4|\lambda_1|$ here, if the neighborhood $D$ is small enough
one will have $|\lambda_2 K_i'(x) | \leq {1 \over 4}|\lambda_1| $. Similarly, since $\nabla f(0,0) = \langle 0,0 \rangle$ by $(1.1)$, $f_i(x,y)$ has a zero of order at least $2$ at the origin and thus 
similarly if $D$ is small enough we have $|\lambda_3 {\partial f_i \over \partial x}(x,y)| \leq {1 \over 4}|\lambda_1|$. Thus if $|\lambda_1| \geq \frac{1}{4}|\lambda|$ we have
\[ \bigg|{\partial P \over \partial x}(x,y)\bigg | \geq \frac{1}{4}|\lambda_1|\]
\[ \geq \frac{1}{16}|\lambda| \tag{4.11}\]
We may now apply Lemma 2.2 in the $x$ direction with $A = \frac{1}{16}|\lambda|$ (recalling that $|\lambda| > 1$), and then integrating the result in $y$. Using 
$(4.4)$ and $(4.5a)-(4.5b)$ the resulting expression translates into
\[ |U_{ilm}(\lambda_1,\lambda_2,\lambda_3)| \leq C\int_{I_{lm}} |\lambda x|^{-1} \prod_{j = 1}^M (x^{\alpha_{ij}}y^{\beta_{ij}})^{\gamma_j} \,dx\,dy \tag{4.12a}\]
By simply taking absolute values of the integrand and integrating the result, $(4.4)$ also gives
\[ |U_{ilm}(\lambda_1,\lambda_2,\lambda_3)| \leq C\int_{I_{lm}} \prod_{j = 1}^M (x^{\alpha_{ij}}y^{\beta_{ij}})^{\gamma_j} \,dx\,dy \tag{4.12b}\]
Combining these gives
\[ |U_{ilm}(\lambda_1,\lambda_2,\lambda_3)| \leq C\int_{I_{lm}} \min(1, |\lambda x|^{-1}) \prod_{j = 1}^M (x^{\alpha_{ij}}y^{\beta_{ij}})^{\gamma_j} \,dx\,dy \tag{4.13}\]
This implies that $(4.9)$ holds, so we are done in the situation where  $|\lambda_1| \geq \frac{1}{4}|\lambda|$. If on the other hand we have $|\lambda_2| \geq \frac{1}{4}|\lambda|$,
we argue similarly but in the $y$ direction. This time we have
\[ {\partial P \over \partial y}(x,y) = \lambda_2 + \lambda_3 {\partial f_i \over \partial y}(x,y) \tag{4.14}\]
This time the fact that $f_i(x,y)$ has a zero of order at least $2$ at the origin and the fact that $|\lambda_2| \geq \frac{1}{4}|\lambda|$ imply that if $D$ is sufficiently small we have
\[ \bigg|{\partial P \over \partial y}(x,y)\bigg | \geq \frac{1}{4}|\lambda_2|\]
\[ \geq \frac{1}{16}|\lambda| \tag{4.15}\]
Then arguing as above in the $y$ direction instead of $x$ gives 
\[ |U_{ilm}(\lambda_1,\lambda_2,\lambda_3)| \leq C\int_{I_{lm}} \min(1, |\lambda y|^{-1}) \prod_{j = 1}^M (x^{\alpha_{ij}}y^{\beta_{ij}})^{\gamma_j} \,dx\,dy \tag{4.16}\]
This once again implies $(4.9)$ and thus we are done with Case 1.

\noindent The next case is the most difficult one.

\noindent {\bf Case 2.} $|\lambda_3| > \frac{1}{2}|\lambda|$, $\alpha_i > 0$, $\beta_i = 0$,  and $K_i(x)$ is not identically zero with $s_i \neq \alpha_i$.

 We will use Lemma 2.1 for either second or third $x$-derivatives and then integrate the result in $y$. Again letting $-P(x,y)$ be the phase function in $(4.7)$, we have
\[{\partial^2 P \over \partial x^2}(x,y) = \lambda_3 {\partial^2 f_i \over \partial x^2}(x,y) + \lambda_2 K_i''(x)\]
\[{\partial^3 P \over \partial x^3}(x,y) = \lambda_3 {\partial^3 f_i \over \partial x^3}(x,y) + \lambda_2 K_i'''(x)  \tag{4.17}\]
Since $K_i$ is a real analytic function of $x^{1 \over N}$ for some large $N$, $K_i(x) =  l_i x^{s_i} + 
O(x^{s_i + e})$ for some small $e > 0$ such that we also have
$$K_i''(x) = l_i s_i(s_i - 1) x^{s_i - 2} + O(x^{s_i - 2 + e}) \eqno (4.18a)$$
$$K_i'''(x) = l_i s_i(s_i - 1)(s_i - 2)x^{s_i - 3} +  O(x^{s_i - 3+ e}) \eqno (4.18b)$$
In addition, by Corollary 3.1.1 we can write
\[{\partial^2 f_i \over \partial x^2}(x,y) = d_i\alpha_i(\alpha_i - 1) x^{\alpha_i - 2} + error\,\,term \tag{4.19a}\]
\[{\partial^3 f_i \over \partial x^3}(x,y) = d_i\alpha_i(\alpha_i - 1)(\alpha_i - 2) x^{\alpha_i - 3} + error\,\, term \tag{4.19b}\]
By Corollary 3.1.1 the error term can be assumed to be of absolute value less than $\delta |d_i| x^{\alpha_i - 2}$ and $\delta |d_i| x^{\alpha_i - 3}$ respectively, for any given $\delta > 0$,
independently of the collection of all $s_i$ and $\alpha_i$.

We are assuming that $s_i \neq \alpha_i$. So the 2 by 2 matrix ${\bf M}$ with rows $(\alpha_i(\alpha_i - 1), s_i(s_i - 1) )$ and
$(\alpha_i(\alpha_i - 1)(\alpha_i - 2), s_i(s_i - 1)(s_i - 2))$ has determinant $\alpha_i(\alpha_i - 1)s_i(s_i - 1)(s_i - \alpha_i)$, which is nonzero since $\alpha_i \geq 2$, $s_i > 1$, and 
$s_i \neq \alpha_i$. 
Thus there is a constant $\rho$ such that $\|{\bf M}{\bf v}\| \geq \rho \|{\bf v}\|$ for all vectors ${\bf v} \in \R^2$. In particular,
 if ${\bf M}_p$ denotes the $p$th row of ${\bf M}$, then given ${\bf v}$ for either $p = 1$ or $2$ we have $|{\bf M}_p \cdot 
{\bf v}| > {\rho \over 2}\|{\bf v}\|$.
So if we let ${\bf v} = (\lambda_3 d_ix^{\alpha_i - 2}, \lambda_2  l_ix^{s_i - 2})$ we see that for each $x$ we either 
have the $p = 1$ case 
$$|\lambda_3  d_i\alpha_i(\alpha_i - 1) x^{\alpha_i - 2} +  \lambda_2  l_i s_i(s_i - 1)x^{s_i - 2}| \geq  {\rho \over 2}
(|\lambda_3  d_i\alpha_i(\alpha_i - 1) x^{\alpha_i - 2}|+  |\lambda_2  l_i s_i(s_i - 1)x^{s_i - 2}|)  \eqno (4.20)$$
Or the $p = 2$ case
\[|\lambda_3  d_i\alpha_i(\alpha_i - 1)(\alpha_i - 2) x^{\alpha_i - 3} +  \lambda_2  l_i s_i(s_i - 1)(s_i - 2)x^{s_i - 3}|\]
\[\geq  {\rho \over 2}
(|\lambda_3  d_i\alpha_i(\alpha_i - 1)(\alpha_i - 2) x^{\alpha_i - 3}|+  |\lambda_2  l_i s_i(s_i - 1)(s_i - 2)x^{s_i - 3}|)  \tag{4.21}\]
For the time being, assume that $\alpha_i \neq 2$ and $s_i \neq 2$ so that the coefficients in $(4.21)$ are nonzero.
Note that by looking at one higher derivative, there exists a $c_1 > 0$ depending on $s_i$ and $\alpha_i$ such that  if one of the two inequalities
 $(4.20)-(4.21)$ holds for $x = x'$, it holds for all $x \in [(1 - c_1)x', (1 + c_1)x']$ if one replaces ${\rho \over 2}$ by ${\rho \over 4}$.
Furthermore, given $(4.18)$ and $(4.19a)-(4.19b)$, if $\delta$ is small enough, which we may assume, $c_1$ may be chosen such that 
for all $(x,y) \in D_i' \cap I_{lm}$ with  $x \in [(1 - c_1)x', (1 + c_1)x']$ at least one of the two following inequalities holds.
\[\bigg|{\partial^2 P \over \partial x^2}(x,y)\bigg| \geq  {\rho \over 8}
(|\lambda_3  d_i\alpha_i(\alpha_i - 1) x^{\alpha_i - 2}|+  |\lambda_2  l_i s_i(s_i - 1)x^{s_i - 2}|) \tag{4.22a}\]
\[\bigg|{\partial^3 P \over \partial x^3}(x,y)\bigg| \geq {\rho \over 8}(|\lambda_3  d_i\alpha_i(\alpha_i - 1)(\alpha_i - 2) x^{\alpha_i - 3}|+  |\lambda_2  l_i s_i(s_i - 1)(s_i - 2)x^{s_i - 3}|)  \tag{4.22b}\]
The error terms in $(4.18a) -(4.18b)$ can be made sufficiently small for $(4.22a)$ or $(4.22b)$ to hold simply by working on a sufficiently small neighborhood of the origin.
 That the error terms in $(4.19a) -(4.19b)$ can be made small enough such that $(4.22a)$ or $(4.22b)$ hold follows from the last part of Theorem 3.1, which says that the collection of all 
$(\alpha_i,\beta_i)$ and $s_i$ can be viewed as fixed and then we can choose $\delta$ (and with it the $D_i$) accordingly 
so that $(4.22a)$ or $(4.22b)$ holds for some appropriately small $c_1$. 

The above assumed that $s_i \neq 2 \neq \alpha_i$, and now we address the possibility that $s_i = 2$ or $\alpha_i = 2$. If $(4.20)$ holds, everything proceeds as above and 
$(4.22a)$ follows. If $(4.20)$ 
does not hold for any $\rho$ depending on $s_i$ and $\alpha_i$, then for a given $\epsilon > 0$ depending on $s_i$ and $\alpha_i$  we can assume 
 that $|\lambda_3 d_i\alpha_i(\alpha_i - 1)x^{\alpha_i - 2} |$ and $|\lambda_2 l_i s_i(s_i - 1)x^{s_i - 2}|$ are within a factor of $1 + \epsilon$ of each other.
If $\epsilon$ is small enough 
this will ensure that all error terms coming from $K_i'''(x)$ in the above argument can be absorbed by the $|\lambda_3  d_i\alpha_i(\alpha_i - 1)(\alpha_i - 2) x^{\alpha_i - 3}|$ term in 
$(4.22b)$ if $s_i = 2$, and all  error terms coming from ${\displaystyle {\partial^3 f_i \over \partial x^3}}$ in the above argument can be absorbed by the 
$|\lambda_2  l_i s_i(s_i - 1)(s_i - 2)x^{s_i - 3}|$ term in $(4.22b)$ if $\alpha_i = 2$. (They can't both be $2$ since $s_i \neq \alpha_i$). Thus $(4.22b)$ holds once again as needed.

Equations $(4.22a)$ and $(4.22b)$ can be summarized by the statement 
that for any $x'$ there is a constant $\rho'  > 0$ such that for either $p = 2$ or $p = 3$, for all $(x,y) \in D_i' \cap I_{lm}$ with $x \in [(1 - c_1)x', (1 + c_1)x']$ we have
$$\bigg|{\partial^p P \over \partial x^p}(x,y)\bigg| \geq \rho' (|\lambda_3| 2^{lp} 2^{-l\alpha_i}  + 
|\lambda_2|2^{lp}2^{-ls_i}) \eqno (4.23)$$
We only need the left factor here. Namely we will only be needing that
$$\bigg|{\partial^p P \over \partial x^p}(x,y)\bigg| \geq \rho'  |\lambda_3| 2^{lp} 2^{-l\alpha_i}$$
Using that $|\lambda_3| > \frac{1}{2}|\lambda|$, this can further be rewritten as
$$\bigg|{\partial^p P \over \partial x^p}(x,y)\bigg| > 2\rho'  |\lambda| 2^{lp} 2^{-l\alpha_i}\eqno (4.24)$$

\noindent Next, we will bound a given $|U_{x'}(\lambda_1,\lambda_2,\lambda_3)|$, where 
$$U_{x'}(\lambda_1,\lambda_2,\lambda_3) = \int_{D_i'\cap( [(1 - c_1)x', (1 + c_1)x'] \times [2^{-m - 1}, 2^{-m}])} e^{-i\lambda_3 f_i(x,y) - i\lambda_1 x - i\lambda_2 y - i\lambda_2 K_i(x)} \,\phi_i(x,y)\,dx\,dy \eqno (4.25)$$
Since $D_i' \cap I_{lm}$ is the union of boundedly many sets $D_i'\cap( [(1 - c_1)x', (1 + c_1)x'] \times [2^{-m - 1}, 2^{-m}])$,
a bound for $|U_{ilm}(\lambda_1,\lambda_2,\lambda_3)|$ is given by a constant times the bound for  $|U_{x'}(\lambda_1,\lambda_2,\lambda_3)|$.

We apply Lemma 2.1 for $p$th derivatives in the $x$ direction in $(4.25)$, using $(4.24)$, and then integrate the result in the $y$ variable, using 
$(4.4)-(4.5)$ on $\phi_i(x,y)$. The result is
\[|U_{x'}(\lambda_1,\lambda_2,\lambda_3)| \leq C 2^{-l - m} |\lambda2^{-l\alpha_i}|^{-{1 \over p}} \prod_{j=1}^M(2^{-l \alpha_{ij}-m\beta_{ij}})^{\gamma_j} \]
\[\leq C  \int_{I_{lm}}(|\lambda |x^{\alpha_i})^{-{1 \over p}}
\prod_{j = 1}^M (x^{\alpha_{ij}}y^{\beta_{ij}})^{\gamma_j} \,dx\,dy  \tag{4.26} \]
Combining $(4.26)$ with the estimate one obtains by simply taking absolute values of the original integrand and 
integrating, we therefore have
\[|U_{x'}(\lambda_1,\lambda_2,\lambda_3)| \leq C \int_{I_{lm}}\min (1, (|\lambda |x^{\alpha_i})^{-{1 \over p}})
\prod_{j = 1}^M (x^{\alpha_{ij}}y^{\beta_{ij}})^{\gamma_j} \,dx\,dy  \tag{4.27}  \]
This estimate is worse when $p = 3$, so for any $x'$ we have
\[|U_{x'}(\lambda_1,\lambda_2,\lambda_3)| \leq C \int_{I_{lm}}\min (1, (|\lambda |x^{\alpha_i})^{-{1 \over 3}})
\prod_{j = 1}^M (x^{\alpha_{ij}}y^{\beta_{ij}})^{\gamma_j} \,dx\,dy  \tag{4.28}  \]
This is uniform in $x'$, so adding over boundedly many $x'$ also gives 
\[|U_{ilm}(\lambda_1,\lambda_2,\lambda_3)| \leq C \int_{I_{lm}}\min (1, (|\lambda|x^{\alpha_i})^{-{1 \over 3}})
\prod_{j = 1}^M (x^{\alpha_{ij}}y^{\beta_{ij}})^{\gamma_j} \,dx\,dy  \tag{4.29} \]
This implies  $(4.9)$, so we are done with Case 2.

\noindent {\bf Case 3.} $|\lambda_3| > \frac{1}{2}|\lambda|$, $\alpha_i > 0$, $\beta_i = 0$,  and $K_i(x)$ is not identically zero with $s_i = \alpha_i$.

The argument of Case 2 does not apply here because the matrix ${\bf M}$ has rank 1 and thus $(4.20)$ and $(4.21)$ may both fail. But if we have
$|\lambda_3 d_i + \lambda_2 l_i| \geq {1 \over 2} (|\lambda_3 d_i|  + |\lambda_2 l_i|)$ for example, then $(4.20)$ holds once again with $\rho = 1$ and then one can argue
exactly as in Case 2. So in what follows we assume that
\[|\lambda_3 d_i + \lambda_2 l_i| <{1 \over 2} (|\lambda_3 d_i|  + |\lambda_2 l_i|) \tag{4.30}\]
Because whenever two real numbers $a$ and $b$ satisfy $|a + b| < {1 \over 2}(|a| + |b|)$ one has that ${1 \over 3} |a| < |b| < 3|a|$, $(4.30)$ implies
\[ |\lambda_3| < 3 \bigg|\frac{l_i}{d_i} \lambda_2\bigg| \tag{4.31}\]
This implies that for sufficiently small $\epsilon_0 < 1$ independent of $\lambda$ we have
\[|\lambda_2| > \epsilon_0|\lambda_3| \tag{4.32}\]
Squaring this and averaging with the fact that $|\lambda_2|^2 > \epsilon_0^2 |\lambda_2|^2$ gives
\[|\lambda_2|^2 > \frac{\epsilon_0^2}{2} (|\lambda_2|^2 + |\lambda_3|^2) \tag{4.33}\]
Adding this to $|\lambda_1|^2 \geq {\displaystyle \frac{\epsilon_0^2}{2} |\lambda_1|^2}$ results in
\[ |\lambda_1|^2 + |\lambda_2|^2  >  \frac{\epsilon_0^2}{2}|\lambda|^2 \tag{4.34}\]
We may now argue as in the Case 1 situation when we had $|\lambda_1| \geq \frac{1}{4}|\lambda|$ or $|\lambda_2| \geq \frac{1}{4}|\lambda|$, replacing the 
${\displaystyle \frac{1}{4}}$ factor by ${\displaystyle \frac{\epsilon_0}{2}}$. One gets $(4.13)$ or $(4.16)$, possibly with a different constant, implying $(4.9)$ as needed. This
completes the argument for Case 3.

\noindent{\bf Case 4.} $|\lambda_3| > \frac{1}{2}|\lambda|$, $\alpha_i > 0$, $\beta_i = 0$, and $K_i(x)$ is identically zero.

Here one just uses the argument of Case 2, setting $\lambda_2 = 0$. One obtains $(4.19a)$ and therefore $(4.22a)$. One then argues as in Case 2 starting with $(4.22a)$. 

\noindent {\bf Case 5.} $|\lambda_3| > \frac{1}{2}|\lambda|$, $\alpha_i > 0$ and $\beta_i > 0$.

Let $P(x,y)$ again denote the phase function $\lambda_3 f_i(x,y) + \lambda_1 x + \lambda_2 y + \lambda_2 K_i(x)$ in $(4.7)$. Recall we are writing $f_i(x,y)$ in the
form $d_i x^{\alpha_i}y^{\beta_i}$ plus a smaller error term for some $d_i \neq 0$ and have corresponding expressions for its various partial derivatives. As a result  
${\displaystyle {\partial^2 P \over \partial x\partial y}(x,y)}$ is of the form $\alpha_i\beta_i\lambda_3d_ix^{\alpha_i-1}y^{\beta_i-1}$ plus a smaller error term. Hence if $D$ is sufficiently small, which 
we may assume,  for each $l$ and $m$, on $D_i' \cap I_{lm}$ we have
$$\bigg|{\partial^2 P \over \partial x\partial y}(x,y) \bigg| \geq C |\lambda_3| 2^{-l(\alpha_i - 1) -m(\beta_i - 1)} \eqno (4.35)$$
We now use the mixed-derivative Van der Corput lemma, Lemma 2.3, in conjunction with $(4.4)-(4.5)$ to bound $\phi_i(x,y)$ and its $y$ derivative and 
$(4.35)$ to provide lower bounds for the mixed partial derivative. One can perform a rotation before applying the resolution of singularities algorithm of Theorem 3.1
to ensure that some $k$th $y$ derivative is nonzero as is needed in Lemma 2.3 (but actually it is not too hard to show that this will hold even without such a rotation). The result is
$$|U_{ilm}(\lambda_1,\lambda_2,\lambda_3)| \leq C  |\lambda_3|^{-{1 \over 2}}(2^{{l(\alpha_i - 1) + m(\beta_i - 1)\over 2}})(2^{-{l+ m \over 2}}) 
\prod_{j = 1}^M (2^{-l \alpha_{ij}-m\beta_{ij}})^{\gamma_j} \eqno (4.36)$$
Since $h_j(x,y) \sim  x^{\alpha_{ij}}y^{\beta_{ij}}$ on $I_{lm}$,
we may write $(4.36)$ in the more convenient form
$$|U_{ilm}(\lambda_1,\lambda_2,\lambda_3)| \leq C  |\lambda_3|^{-{1 \over 2}}\int_{I_{lm}}(x^{\alpha_i}y^{\beta_i})^{-{1 \over 2}}
\prod_{j = 1}^M (x^{\alpha_{ij}}y^{\beta_{ij}})^{\gamma_j} \,dx\,dy \eqno (4.37)$$
By simply taking absolute values of the integrand and integrating, one also has 
$$|U_{ilm}(\lambda_1,\lambda_2,\lambda_3)|
\leq C \int_{I_{lm}}\prod_{j = 1}^M (x^{\alpha_{ij}}y^{\beta_{ij}})^{\gamma_j} \,dx\,dy \eqno (4.38)$$
Combining $(4.37)$ and $(4.38)$ results in
$$|U_{ilm}(\lambda_1,\lambda_2,\lambda_3)| \leq C \int_{I_{lm}}\min( 1, |\lambda_3 (x^{\alpha_i}y^{\beta_i})|^{-{1 \over 2}})
\prod_{j = 1}^M (x^{\alpha_{ij}}y^{\beta_{ij}})^{\gamma_j} \,dx\,dy \eqno (4.39)$$
This in turn implies
\[|U_{ilm}(\lambda_1,\lambda_2,\lambda_3)| \leq C \int_{I_{lm}}\min( 1, |\lambda_3 (x^{\alpha_i}y^{\beta_i})|^{-{1 \over 3}})
\prod_{j = 1}^M (x^{\alpha_{ij}}y^{\beta_{ij}})^{\gamma_j} \,dx\,dy \tag{4.40}\]
Since $|\lambda_3| > \frac{1}{2}|\lambda|$ and $f_i(x,y) \sim x^{\alpha_i}y^{\beta_i}$, equation $(4.40)$  implies $(4.9)$ and we are done with Case 5.

\noindent {\bf Case 6.} $|\lambda_3| > \frac{1}{2}|\lambda|$, $\alpha_i = 0$ and $\beta_i > 0$

\noindent Since $f(x,y)$ is assumed to have a zero of order at least two at the origin, $\beta_i > 1$ here. Since by $(3.1)$ we have ${\displaystyle {\partial^2 f_i \over \partial y^2}(x,y)} = d_i\beta_i(\beta_i - 1)y^{\beta_i - 2}$ plus a smaller error term, on $D_i' \cap I_{lm}$ we have
\[\bigg|{\partial^2 P \over \partial y^2 }(x,y)\bigg|\geq C |\lambda_3| 2^{-m(\beta_i - 2)} \tag{4.41}\]
We now use Lemma 2.1 for second derivatives in the $y$ variable and then integrate the result in $x$. Using $(4.4)-(4.5)$ to 
bound the $\phi_i(x,y)$ and its $y$ derivative, and $(4.41)$ to provide a lower bound on the second $y$ derivative, we obtain
\[|U_{ilm}(\lambda_1,\lambda_2,\lambda_3)|  \leq C|\lambda_3|^{-{1 \over 2}}(2^{m{\beta_i \over 2} - m})(2^{-l})\prod_{j = 1}^M (2^{-l \alpha_{ij}-m\beta_{ij}})^{\gamma_j} 
\tag{4.42}\]
This may be rewritten as
\[|U_{ilm}(\lambda_1,\lambda_2,\lambda_3)| \leq C  |\lambda_3|^{-{1 \over 2}}(2^{{l(\alpha_i - 1) + m(\beta_i - 1)\over 2}})(2^{-{l+ m \over 2}}) 
\prod_{j = 1}^M (2^{-l \alpha_{ij}-m\beta_{ij}})^{\gamma_j} \tag{4.43}\]
This is precisely $(4.36)$. The exact argument of Case 5 starting with $(4.36)$ now gives $(4.9)$. This completes the Case 6 argument, and therefore the proof of Theorem 4.1.

\section{The proof of Theorem 1.1}

We focus our attention on proving part 1 of Theorem 1.1. Since the estimates easily hold for $|\lambda| < 2$ by simply taking absolute values of the integrand and integrating, we 
will always assume $|\lambda| \geq  2$ and prove the estimates with $1 + |\lambda|$ replaced by $|\lambda|$ and $\ln(2 + |\lambda|)$ by $\ln|\lambda|$. 

Since $\hat{\mu}(\lambda) 
= \sum_i U_i(\lambda_1,\lambda_2,\lambda_3)$, where $ U_i(\lambda_1,\lambda_2,\lambda_3)$ is as in $(4.2)$, and there are finitely many $i$, to prove Theorem 1.1 
it suffices to show that each $|U_i(\lambda_1,\lambda_2,\lambda_3)|$ satisfies $(1.9a)-(1.9c)$. To this end, we add equation $(4.9)$ of Theorem 4.1 over all $l$ and $m$ to obtain that 
$|U_i (\lambda_1,\lambda_2,\lambda_3)|$  is bounded by
\[ C \sum_{\{(l,m): D_i' \cap I_{lm} \neq \emptyset \}}\int_{I_{lm}}\min( 1, \max (|\lambda x|^{-1}, |\lambda y|^{-1}, |\lambda x^{\alpha_i}y^{\beta_i}|^{-{1 \over 3}}))
\prod_{j = 1}^M (x^{\alpha_{ij}}y^{\beta_{ij}})^{\gamma_j} \,dx\,dy \tag{5.1}\]
Since $D_i'$ is of the form $\{(x,y): 0 < x < a,\,\,h_i(x) < y < H_i(x)\}$ with $h_i(x)$ being identically zero or having a zero of greater order
at $x = 0$ than $H_i(x)$, the shape of $D_i'$ is such that equation $(5.1)$ implies 
\[ |U_i(\lambda_1,\lambda_2,\lambda_3) | \leq C \int_{D_i'}\min( 1, \max (|\lambda x|^{-1}, |\lambda y|^{-1}, |\lambda x^{\alpha_i}y^{\beta_i}|^{-{1 \over 3}}))
\prod_{j = 1}^M (x^{\alpha_{ij}}y^{\beta_{ij}})^{\gamma_j} \,dx\,dy \tag{5.2}\]
Since $f_i(x,y) \sim x^{\alpha_i}y^{\beta_i}$ and $|h_{ij}(x,y)| \sim x^{\alpha_{ij}}y^{\beta_{ij}}$ on 
$D_i'$, (recall these are $f(x,y)$ and $h_j(x,y)$ in the coordinates of $D_i'$), the above implies that
\[ |U_i(\lambda_1,\lambda_2,\lambda_3) | \leq C \int_{D_i'}\min( 1, \max (|\lambda x|^{-1}, |\lambda y|^{-1}, |\lambda f_i(x,y)|^{-{1 \over 3}}))
\prod_{j = 1}^M |h_{ij}(x,y)|^{\gamma_j} \,dx\,dy \tag{5.3}\]
\[\leq C \int_{D_i'}\min( 1, |\lambda x|^{-1}) \prod_{j = 1}^M|h_{ij}(x,y)|^{\gamma_j} \,dx\,dy  + C \int_{D_i'}\min( 1, |\lambda y|^{-1}) \prod_{j = 1}^M|h_{ij}(x,y)|^{\gamma_j} \,dx\,dy\]
\[ + C \int_{D_i'} \min( 1, |\lambda f_i(x,y)|^{-{1 \over 3}})\prod_{j = 1}^M |h_{ij}(x,y)|^{\gamma_j} \,dx\,dy \tag{5.4}\]
Observe that
\[\min( 1, |\lambda x|^{-1}) \leq \min( 1, |\lambda x|^{-\min(\eta,\frac{1}{3})}) \leq |\lambda x|^{-\min(\eta,\frac{1}{3})}\tag{5.5a}\] 
\[\min( 1, |\lambda y|^{-1}) \leq \min( 1, |\lambda y|^{-\min(\eta,\frac{1}{3})}) \leq |\lambda y|^{-\min(\eta,\frac{1}{3})} \tag{5.5b}\]
Thus $(5.4)$ is bounded by
\[ C |\lambda|^{-\min(\eta,\frac{1}{3})} \int_{D_i'}|x|^{-\min(\eta,\frac{1}{3})} \prod_{j = 1}^M|h_{ij}(x,y)|^{\gamma_j} \,dx\,dy\]
\[  + C |\lambda|^{-\min(\eta,\frac{1}{3})} \int_{D_i'}|y|^{-\min(\eta,\frac{1}{3})}\prod_{j = 1}^M|h_{ij}(x,y)|^{\gamma_j} \,dx\,dy\]
\[ + C \int_{D_i'} \min( 1, |\lambda f_i(x,y)|^{-{1 \over 3}})\prod_{j = 1}^M |h_{ij}(x,y)|^{\gamma_j} \,dx\,dy \tag{5.6}\]
The first two terms of $(5.6)$ are done very similarly and we focus on the first term. The idea is to apply H\"older's inequality on  $|x|^{-\min(\eta,\frac{1}{3})}$ and 
$\prod_{j = 1}^M|h_{ij}(x,y)|^{\gamma_j}$. This will lead to a finite result for the $|x|^{-\min(\eta,\frac{1}{3})}$ factor if we are raising it to a factor of
less than $\frac{1}{\min(\eta,\frac{1}{3})}$. Writing $p_0 = \frac{1}{\min(\eta,\frac{1}{3})}$, its conjugate exponent is $p_0' = \frac{p_0}{p_0 - 1} = \frac{1}{1 - \min(\eta,\frac{1}{3}) }$.
But the compatibility condition $(1.8)$ ensures that there is some $t > p_0'$ such that $\int_{D \cap E}  (\prod_{j=1}^M|h_j(x,y)|^{\gamma_j})^t  < \infty$. This means that 
there is a constant $C_0$ such that $\int_{D_i'} (\prod_{j = 1}^M|h_{ij}(x,y)|^{\gamma_j})^t < C_0$. Thus if we let $p$ be the conjugate exponent to $t$, we have
$p < p_0$ and we can apply H\"older's inequality in the desired fashion for this value of $p$. Hence we see that the first term is bounded by $C'|\lambda|^{-\min(\eta,\frac{1}{3})}$. 
This is in all cases at least as good as the estimate needed in Theorem 1.1, so we are done with the analysis of this term.

The second term of $(5.6)$ can be dealt with in the same way as the first, so we move on to the third term of $(5.6)$. Converting back into the original coordinates on $D_i$ (before the resolution of singularities), this term becomes 
\[C \int_{D_i}\min( 1, |\lambda f(x,y)|^{-{1 \over 3}}) \prod_{j = 1}^M |h_j(x,y)|^{\gamma_j} \,dx\,dy \tag{5.7}\]
This is at most
\[ C \int_{D \cap E} \min( 1, |\lambda f(x,y)|^{-{1 \over 3}}) \prod_{j = 1}^M |h_j(x,y)|^{\gamma_j} \,dx\,dy \tag{5.8}\]
Letting $d\mu_h$ be the measure $\prod_{j = 1}^M |h_j(x,y)|^{\gamma_j} \,dx\,dy$, this becomes
\[ C \int_{D \cap E} \min( 1, |\lambda f(x,y)|^{-{1 \over 3}})\,d\mu_h \tag{5.9}\]
 We rewrite the integral in $(5.9)$ as
\[\mu_h(\{(x,y) \in {D \cap E} : |f(x,y)| < |\lambda|^{-1} \}) \]
\[+ |\lambda|^{-{1 \over 3}} \int_{ \{(x,y) \in {D \cap E} : |f(x,y)| > {1 \over |\lambda|}\}} |f(x,y)|^{-{1 \over 3}}\,d\mu_h  \tag{5.10}\]
By the characterization of integrals of powers of functions in terms of their distribution functions, applied to $|f(x,y)|^{-1}$ times the characteristic function of 
$|f(x,y)| > {1 \over |\lambda|}$, the integral in $(5.10)$ is equal to
\[{1 \over 3}\int_{|\lambda|^{-1}}^{\infty}t^{-{4 \over 3}}\mu_h(\{(x,y) \in {D \cap E}: |\lambda|^{-1}
< |f(x,y)| < t\})\,dt \tag{5.11}\]
Recalling the definition $(1.7)$ of $(\eta, l)$, $(5.11)$  is bounded by
\[C \int_{|\lambda|^{-1}}^{\infty}t^{-{4 \over 3}}\min(1, t^{\eta}|\ln t|^l) \,dt \tag{5.12}\]
We can put the minimum with 1 here since $\mu_h$ is a finite measure on $D \cap E$.
Given $(5.12)$ and the fact that first term in $(5.10)$ is bounded by $C|\lambda|^{-\eta}(\ln |\lambda|)^l$ by $(1.7)$, we conclude that $(5.10)$ is bounded by 
\[C|\lambda|^{-\eta}(\ln |\lambda|)^l + C |\lambda|^{-{1 \over 3}} \int_{|\lambda|^{-1}}^{\infty}t^{-{4 \over 3}}\min(1, t^{\eta}|\ln t|^l)  \,dt \tag{5.13}\]
When $\eta \leq {1 \over 3}$, we use $\min(1, t^{\eta}|\ln t|^l) \leq t^{\eta}|\ln t|^l$ in $(5.13)$. If 
$\eta < {1 \over 3}$, the result is a bound $C|\lambda|^{-\eta}(\ln |\lambda|)^l $, giving the desired estimate $(1.9a)$. If $\eta = 
{1 \over 3}$ we obtain an additional logarithmic factor in the above, giving $(1.9b)$.

If $\eta  > {1 \over 3}$, we use $1$ in the minimum if $t > 1$, and $t^{\eta}|\ln t|^l$ in the minimum if $t < 1$. In  this case we get the bound
\[ C|\lambda|^{-\eta}  + C |\lambda|^{-{1 \over 3}} \int_{|\lambda|^{-1}}^1  t^{\eta - {4 \over 3}}|\ln t|^l + C |\lambda|^{-{1 \over 3}} \int_{1}^{\infty} t^{-{4 \over 3}}  \,dt 
\tag{5.14}\]
Since $\eta > {1 \over 3}$, this is bounded by a constant times $|\lambda|^{-{1 \over 3}}$, giving us $(1.9c)$. 

Thus in all cases, the third term of $(5.6)$ satisfies the bounds of
part 1 of Theorem 1.1. Since we saw the same is true for the first two terms, we conclude that each $|U_i(\lambda_1,\lambda_2,\lambda_3)|$  satisfies the bounds of the first part 
of Theorem 1.1. As explained at the beginning of this section, this completes the proof of  Theorem 1.1 part 1.

\noindent Moving to part 2 of Theorem 1.1, as in its statement we suppose that $\alpha(x,y)$ is nonnegative with $\alpha(0,0) > 0$.
Observe that $\hat{\mu}(0,0, N)$ is given by
\[ \hat{\mu}(0,0,N) =  \int_{D} e^{ - i N f(x,y) } \phi(x,y)\,dx\,dy \tag{5.15} \]
Since $\phi(x,y) = \chi_E(x,y) \,\alpha(x,y) \prod_{j=1}^M|h_j(x,y)|^{\gamma_j}$, if we define
 \[ d\nu = \chi_E(x,y) \alpha(x,y) \prod_{j=1}^M|h_j(x,y)|^{\gamma_j}\,dx\,dy\]
then $(5.15)$ can be rewritten as 
\[ \hat{\mu}(0,0, N) =  \int_D  e^{ - i N f(x,y)} d\nu \tag{5.16}\]

\noindent We now invoke the following result of [CaCW], as stated in [Gre].

\begin{theorem} (Proposition 2.1.3  of [Gre])

Let $(\Omega, m)$ be any finite measure space, $0 < \delta < 1$, and suppose that $f : \Omega \rightarrow \R$ is a measurable function such that for all real nonzero $N$ we have
$$\left|\int_\Omega e^{i N f(x)}\,dm \right|\leq A|N|^{-\delta}$$
where $0<\delta<1$. Then for each $c\in\mathbb{R}$, we have
$$m(\{x\in\Omega:|f(x)-c|\leq t\}) \leq C_\delta At^\delta$$
where $C_\delta$ depends only on $\delta$.

\end{theorem}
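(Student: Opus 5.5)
The plan is to smooth the indicator of the slab $\{|f-c|\le t\}$ by a Schwartz function whose Fourier transform has compact support, and then write the smoothed slab measure as an average of the oscillatory integrals that we are assuming are small. Shifting $f$ by $c$ only multiplies $\int e^{iNf}\,dm$ by the unimodular factor $e^{-iNc}$, so the hypothesis is unchanged and we may take $c=0$.

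Choose a fixed even function $\psi\in\mathcal{S}(\R)$ with $\psi\ge 0$ everywhere, $\psi\ge 1$ on $[-1,1]$, and $\hat\psi$ supported in $[-1,1]$. One way to build it is to take $\psi=|\check g|^2$ for a suitable bump $g$ supported in $[-\frac12,\frac12]$, and then rescale so that $\psi\ge1$ on $[-1,1]$. Since $\chi_{[-t,t]}(s)\le \psi(s/t)$, we get
\[ m(\{|f|\le t\}) \le \int_\Omega \psi(f(x)/t)\,dm = \int_\Omega \int_\R t\,\hat\psi(tN)\, e^{iNf(x)}\,dN\,dm(x), \]
up to the normalization constant of the Fourier inversion formula. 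The exchange of integrals is justified by Fubini, because $m$ is finite and $\hat\psi$ is integrable.

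Next we split the $N$ integral. Near $N=0$ the hypothesis is useless, since it gives only $A|N|^{-\delta}$, which blows up. We can, however, restrict to $|N|\le 1/t$, because that contains the support of $\hat\psi(t\,\cdot)$. On this range we bound $|\int e^{iNf}dm|$ by $\min(m(\Omega),A|N|^{-\delta})$, which gives
\[ m(\{|f|\le t\}) \le C\,t\int_{|N|\le 1/t} A|N|^{-\delta}\,dN = C\,t\cdot A\,\frac{2}{1-\delta}\,t^{\delta-1} = C_\delta A t^\delta. \]
The integral converges at $N=0$ precisely because $\delta<1$, and this is where the dependence of $C_\delta$ on $\delta$ enters. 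No $m(\Omega)$ term appears, because $|N|^{-\delta}$ is locally integrable, so the bound uses only $A$, as the statement requires.

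The only step that needs care is constructing $\psi$ so that it is simultaneously a majorant of $\chi_{[-1,1]}$ and has compactly supported Fourier transform. The squared-modulus construction handles both, with a harmless rescaling. After that the argument is the direct computation above. I expect no real obstacle, except for making sure the bound holds for every $t>0$; the estimate is trivially uniform in $t$, since only the scaling $N\mapsto tN$ is used.
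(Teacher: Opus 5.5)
The paper gives no proof of this statement; it quotes the result from [CaCW] as formulated in [Gre], so there is no internal argument to compare yours against. Your argument is the standard Fourier-majorant proof of this fact and it is correct. The majorant $\psi=|\check g|^2$, with $\int g\neq 0$ so that $\psi$ is bounded below near the origin, can be normalized or dilated to exceed $1$ on $[-1,1]$ without enlarging the support of $\hat\psi$. Everything then reduces to $t\int_{|N|\le 1/t}A|N|^{-\delta}\,dN=\frac{2}{1-\delta}At^{\delta}$, and this is exactly where the hypothesis $\delta<1$ is used.
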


We apply Theorem 5.1 here to the measure $\nu$ and $c = 0$. Suppose that we had $|\hat{\mu}(\lambda)| \leq C|\lambda|^{-\delta}$ for some $0 < \delta < 1$. Then restricting
to the vertical direction and invoking Theorem 5.1 gives 
\[ \nu(\{(x,y)\in D: |f(x,y)| < t\}) \leq C' t^{\delta} \tag{5.17} \]
In other words,
\[ \int_{\{(x,y) \in D \cap E: |f(x,y)| < t\}} \alpha(x,y) \prod_{j=1}^M|h_j(x,y)|^{\gamma_j}\,dx\,dy \leq C' t^{\delta} \tag{5.18} \]
Since $\alpha(x,y)$ is nonnegative and bounded below by a positive constant on a neighborhood of $(0,0)$, in view of $(1.7)$ we must have that $\delta \leq \eta$. Thus part 1a of Theorem 
1.1 cannot hold with $\eta$ replaced by any larger value in this setting. When $l = 1$ it cannot hold without the logarithmic factor; if it did $(5.18)$ would hold for $\delta = \eta$, contradicting 
$(1.7)$. This completes the proof of part 2 of Theorem 1.1 and therefore the proof of the whole theorem.

\section{The proof of Theorem 1.2}

As described after the statement of the theorem, part 2 of Theorem 1.2 follows readily from part 2 of Theorem 1.1, so we devote our attention to proving part 1.

\noindent {\bf Case 1.} $\eta \geq \frac{1}{3}$. 

This case follows quickly from Theorem 1.1. Parts 1b or 1c of Theorem 1.1 imply that $Tf = f \ast \mu$ is bounded from $L^2(\R^3)$ to $L^2_s(\R^3)$ for any $s < \frac{1}{3}$. 
Since $\mu$ is a finite measure, $T$ is also bounded on $L^p$ for any $1 < p < \infty$. Interpolating this with the $L^2(\R^3)$ to $L^2_s(\R^3)$  result, we get $L^p(\R^3)$ to
$L^p_s(\R^3)$ boundedness for $(\frac{1}{p},s)$ in the open triangle with vertices $(0,0)$, $(\frac{1}{2}, s)$, and $(1,0)$ for any $0 < s < \frac{1}{3}$. Taking the union of these
as $ s \rightarrow \frac{1}{3}$, gives $L^p(\R^3)$ to
$L^p_s(\R^3)$ boundedness for $(\frac{1}{p},s)$ in the open triangle with vertices $(0,0)$, $(\frac{1}{2}, \frac{1}{3})$, and $(1,0)$, which is the domain given by Theorem 1.2.

\noindent {\bf Case 2.} $\eta < \frac{1}{3}$. 

We start with the statement $(4.9)$ of Theorem 4.1. The expression $(4.9)$ for $|U_{ilm}(\lambda)|$ is bounded by
\[ C\int_{I_{lm}}\min(1, \max (|\lambda x|^{-\frac{1}{3}}, |\lambda y|^{-\frac{1}{3}}, |\lambda x^{\alpha_i}y^{\beta_i}|^{-\frac{1}{3}}))
\prod_{j = 1}^M (x^{\alpha_{ij}}y^{\beta_{ij}})^{\gamma_j} \,dx\,dy \tag{6.1}\]
We remove the left side of the minimum here, and get that $(6.1)$ is bounded by
\[ C |\lambda|^{-\frac{1}{3}} \int_{I_{lm}} \max (|x|^{-\frac{1}{3}}, |y|^{-\frac{1}{3}}, |x^{\alpha_i}y^{\beta_i}|^{-\frac{1}{3}})
\prod_{j = 1}^M (x^{\alpha_{ij}}y^{\beta_{ij}})^{\gamma_j} \,dx\,dy \tag{6.2}\]
Since $f_i(x,y) \sim x^{\alpha_i}y^{\beta_i}$ and $|h_{ij}(x,y)| \sim x^{\alpha_{ij}}y^{\beta_{ij}}$ on $D_i'$, equation $(6.2)$ is bounded by (possibly with a different constant)
\[ C |\lambda|^{-\frac{1}{3}} \int_{I_{lm}} \max (|x|^{-\frac{1}{3}}, |y|^{-\frac{1}{3}}, |f_i(x,y)|^{-\frac{1}{3}})
\prod_{j = 1}^M |h_{ij}(x,y)|^{\gamma_j} \,dx\,dy \tag{6.3}\]
Write $\mu = \sum_{ilm} \mu_{ilm}$, where $\mu_{ilm}$ is $\mu$ times the characteristic function in $(x,y)$ of the set corresponding to $I_{lm}$ in the coordinates of $D_i'$. Thus
$\widehat{\mu_{ilm}}(\lambda)$ is given by $U_{ilm}(\lambda_1,\lambda_2,\lambda_3)$. Let $T_{ilm} f = f \ast \mu_{ilm}$. Thus $T = \sum_{ilm} T_{ilm}$.  Then by $(6.3)$, one has
\[ \|T_{ilm}\|_{L^2 \rightarrow L^2_{\frac{1}{3}}} \leq C \int_{I_{lm}} \max (|x|^{-\frac{1}{3}}, |y|^{-\frac{1}{3}}, |f_i(x,y)|^{-\frac{1}{3}})
\prod_{j = 1}^M |h_{ij}(x,y)|^{\gamma_j} \,dx\,dy \tag{6.4}\]
By Young's inequality for measures (keeping in mind our coordinate changes all have Jacobian determinant 1), for $1 < p < \infty$ we also have
\[ \|T_{ilm}\|_{L^p \rightarrow L^p} \leq C \int_{I_{lm}}\prod_{j=1}^M |h_{ij}(x,y)|^{\gamma_j} \,dx\,dy \tag{6.5}\]
We now interpolate $(6.4)$ with $(6.5)$, weighting $(6.4)$ by $3\eta '$ for some $\eta ' < \eta $ and weighting $(6.5)$ by $1 - 3\eta '$. Keeping in mind that all the factors in $(6.4)$ and $(6.5)$ do not
vary by more than a uniformly bounded constant on any $I_{lm}$, the result is
\[ \|T_{ilm}\|_{L^{p_{\eta '}} \rightarrow L^{p_{\eta '}}_{\eta '}} \leq C \int_{I_{lm}} \max (|x|^{-\eta '}, |y|^{-\eta '}, |f_i(x,y)|^{-\eta '})
\prod_{j = 1}^M |h_{ij}(x,y)|^{\gamma_j} \,dx\,dy \tag{6.6}\]
Here $p_{\eta '}$ is defined by the relation
\[ \frac{1}{p_{\eta '}} = \frac{3\eta '}{2} + \frac{1 - 3 \eta '}{p} \tag{6.7}\]
Next, we observe that the right-hand side of $(6.6)$ is bounded by
\[ C \int_{I_{lm}} |x|^{-\eta '} \prod_{j = 1}^M |h_{ij}(x,y)|^{\gamma_j} \,dx\,dy +  C \int_{I_{lm}} |y|^{-\eta '}  \prod_{j = 1}^M |h_{ij}(x,y)|^{\gamma_j} \,dx\,dy \]
\[+ C \int_{I_{lm}} |f_i(x,y)|^{-\eta '} \prod_{j = 1}^M |h_{ij}(x,y)|^{\gamma_j} \,dx\,dy \tag{6.8}\]
Letting $T_i = \sum_{l,m} T_{ilm}$, we thus have
\[ \|T_i\|_{L^{p_{\eta '}} \rightarrow L^{p_{\eta '}}_{\eta '}} \leq  C \int_{D_i'} |x|^{-\eta '} \prod_{j = 1}^M |h_{ij}(x,y)|^{\gamma_j} \,dx\,dy +  C \int_{D_i'} |y|^{-\eta '}  \prod_{j = 1}^M |h_{ij}(x,y)|^{\gamma_j} \,dx\,dy \]
\[+ C \int_{D_i'} |f_i(x,y)|^{-\eta '} \prod_{j = 1}^M |h_{ij}(x,y)|^{\gamma_j} \,dx\,dy \tag{6.9}\]
The reason we can replace the union of the dyadic rectangles $I_{lm}$ with the smaller $D_i'$ in $(6.9)$ is that $D_i'$ is of the form $\{(x,y): 0 < x < a,\,\,h_i(x) < y < H_i(x)\}$ with $h_i(x)$ being identically zero or having a zero of greater order at $x = 0$ than $H_i(x)$. This shape in conjunction with the fact that the $f_i$ and $h_{ij}$ are comparable to monomials means that doing this
replacement will at most change the constant $C$ appearing here.

Following $(5.6)$ we showed using the compatibility condition $(1.8)$ that the first two terms of $(6.9)$ with $\eta '$ replaced by $\eta$ are finite. Since $\eta ' < \eta$ the 
first two terms here must be finite too. As for the third term of $(6.9)$, returning to the original coordinates turns it into
\[ C\int_{D_i} |f(x,y)|^{-\eta '} \prod_{j = 1}^M |h_{j}(x,y)|^{\gamma_j} \,dx\,dy \tag{6.10}\]
\[ \leq C\int_{D \cap E} |f(x,y)|^{-\eta '} \prod_{j = 1}^M |h_{j}(x,y)|^{\gamma_j} \,dx\,dy \tag{6.11}\]
\[ = C \sum_{k = 0}^\infty \int_{\{(x,y) \in D \cap E: \, 2^{-k-1} \leq |f(x,y)| < 2^{-k}\}} |f(x,y)|^{-\eta '} \prod_{j = 1}^M |h_{j}(x,y)|^{\gamma_j} \,dx\,dy \tag{6.12}\]
\[ \leq C' \sum_{k = 0}^\infty 2^{k \eta '} \int_{\{(x,y) \in D \cap E: \,2^{-k-1} \leq |f(x,y)| < 2^{-k}\}} \prod_{j = 1}^M |h_{j}(x,y)|^{\gamma_j} \,dx\,dy \tag{6.13}\]
Inserting $(1.7)$ into $(6.13)$ gives that the above is bounded by
\[ C'' \sum_{k = 0}^\infty k 2^{k(\eta ' - \eta)} \tag{6.14} \]
Since $\eta' < \eta$, the sum $(6.14)$ is finite. We conclude that the final term of $(6.9)$ is finite.  Now we have seen that the three terms of 
$(6.9)$ are all finite, so we may conclude that $T_i$ is bounded from $L^{p_{\eta '}}$ to $L^{p_{\eta '}}_{\eta '}$. Adding this over all the finitely many $i$ at last gives that $T$ itself is bounded from $L^{p_{\eta'}}$ to $L^{p_{\eta '}}_{\eta '}$. 

Note that as the $p$ in $(6.5)$ converges to $1$ and $\eta'$ converges to $\eta$, by $(6.7)$ we have that $({1 \over p_{\eta'}}, \eta')$ converges to $(1 - \frac{3 \eta}{2}, \eta)$, the
upper right corner of the trapezoid $V$ of Theorem 1.2. Similarly, as $p$  converges to $\infty$ and $\eta'$ converges to $\eta$, equation $(6.7)$ gives that $({1 \over p_{\eta'}}, \eta')$ converges
to $(\frac{3 \eta}{2}, \eta)$, the upper left corner of $V$. Since one can interpolate any $L^p$ to $L^{p_{\eta '}}_{\eta '}$ boundedness with any $L^q$ to $L^q$ boundedness
for $1 < q < \infty$ given that $T$ is a convolution with a finite measure, we conclude that we have $L^p$ to $L^p_s$ boundedness for any $(\frac{1}{p}, s)$ in the interior of $V$.
This is exactly the statement of Theorem 1.2 for Case 2 and we are done.

\section{References}

\noindent [AGuV] V. Arnold, S. Gusein-Zade, A. Varchenko, {\it Singularities of differentiable maps},
Volume II, Birkh\"auser, Basel, 1988. \parskip = 4pt\baselineskip = 3pt

\noindent [BaGuZhZo] S. Basu, S. Guo, R. Zhang, P. Zorin-Kranich, {\it A stationary set method for estimating oscillatory integrals},
J. Eur. Math. Soc. (JEMS) {\bf 28} (2026), no. 5, 2067-2100.

\noindent [BNW] J. Bruna, A. Nagel, and S. Wainger, {\it Convex hypersurfaces and Fourier transforms}, Ann. of Math. (2) {\bf 127} no. 2, (1988), 333--365. 

\noindent [CaCW] A. Carbery, M. Christ, J. Wright, {\it Multidimensional van der Corput and sublevel set estimates}, J. Amer. Math. Soc. {\bf 12} (1999), no. 4, 981-1015. 

\noindent [CKaNo] K. Cho, J. Kamimoto, T. Nose, {\it Asymptotic analysis of oscillatory integrals via the Newton polyhedra of the phase and the amplitude},  J. Math. Soc. Japan, {\bf 65} (2013), no. 2, 521-562.

\noindent [DeNiSa] J. Denef, J. Nicaise, P. Sargos, {\it  Oscillatory integrals and Newton polyhedra},  J. Anal. Math. {\bf 95} (2005), 147-172.

\noindent [D] J.J. Duistermaat, {\it Oscillatory integrals, Lagrange immersions, and unfolding of \hfill \break singularities}, Comm. Pure Appl. Math., {\bf 27} (1974), 207-281.

\noindent [ESa] L. Erd\H{o}s, M. Salmhofer, {\it Decay of the Fourier transform of surfaces with vanishing curvature}, (English summary) Math. Z. {\bf 257} (2007), no. 2, 261-294. 

\noindent [Gre] J. Green, {\it The role of structure in oscillatory integral estimates}, PhD thesis, University of Edinburgh, 2022.

\noindent [G1] M. Greenblatt, {\it Uniform bounds for Fourier transforms of surface measures in $\R^3$ with nonsmooth density},
Trans. Amer. Math. Soc. {\bf 368} (2016), no. 9, 6601-6625.  

\noindent [G2] M. Greenblatt, {\it Convolution kernels of 2D Fourier multipliers based on real analytic functions}, J. Geom. Anal. {\bf 28} (2018), no. 2, 787-816.

\noindent [G3] M. Greenblatt, {\it Fourier transforms of irregular mixed homogeneous hypersurface measures} (English summary),
Math. Nachr. {\bf 291} (2018), no. 7, 1075-1087.

\noindent [G4] M. Greenblatt, {\it Resolution of singularities, asymptotic expansions of oscillatory integrals, and related phenomena}, J. Anal. Math. {\bf 111}  (2010), no. 1, 221-245.

\noindent [G5] M. Greenblatt, {\it Oscillatory integral decay, sublevel set growth, and the Newton polyhedron}, Math. Annalen {\bf 346} (2010), no. 4, 857-895.

\noindent [G6] M. Greenblatt, {\it Van der Corput lemmas and Fourier transforms of irregular hypersurface measures}, preprint, arXiv:1409.4059v3.

\noindent [G7] M. Greenblatt, {\it Smooth and singular maximal averages over 2D hypersurfaces and associated Radon transforms}, Adv. Math. {\bf 377} (2021), Article ID 107465, 46 p.

\noindent [IKeM] I. Ikromov, M. Kempe, and D. M\"uller, {\it Estimates for maximal functions associated
to hypersurfaces in $R^3$ and related problems of harmonic analysis}, Acta Math. {\bf 204} (2010), no. 2,
151-271.

\noindent [IM] I. Ikromov, D. M\"uller, {\it  Uniform estimates for the Fourier transform of surface-carried measures in
$\R^3$ and an application to Fourier restriction}, J. Fourier Anal. Appl., {\bf 17} (2011), no. 6, 1292-1332.

\noindent [K1] V. N. Karpushkin, {\it A theorem concerning uniform estimates of oscillatory integrals when 
the phase is a function of two variables}, J. Soviet Math. {\bf 35} (1986), 2809-2826.

\noindent [K2] V. N. Karpushkin, {\it Uniform estimates of oscillatory integrals with parabolic or hyperbolic phases}, J. Soviet Math. {\bf 33} (1986), 1159-1188.

\noindent [PS] D. H. Phong, E. M. Stein, {\it The Newton polyhedron and oscillatory integral operators}, Acta Mathematica {\bf 179} (1997), 107-152.

\noindent [PrY] M. Pramanik, C.W. Yang, {\it Decay estimates for weighted oscillatory integrals in $\R^2$}, Indiana Univ. Math. J., {\bf 53}  (2004), 613-645.

\noindent [S] E. Stein, {\it Harmonic analysis; real-variable methods, orthogonality, and oscillatory \hfill\break
integrals}, Princeton Mathematics Series Vol. 43, Princeton University Press, Princeton, NJ, 1993.

\noindent [V] A. N. Varchenko, {\it Newton polyhedra and estimates of oscillatory integrals}, Functional Anal. Appl. {\bf 18} (1976), no. 3, 175-196.

\vskip 0.5 in

\noindent Department of Mathematics, Statistics, and Computer Science \hfill \break
\noindent University of Illinois at Chicago \hfill \break
\noindent 322 Science and Engineering Offices \hfill \break
\noindent 851 S. Morgan Street \hfill \break
\noindent Chicago, IL 60607-7045 \hfill \break
\noindent greenbla@uic.edu

\end{document}